\documentclass[final,onefignum,onetabnum]{siamart171218}

\usepackage{amsmath,amssymb}
\usepackage{algorithm,algorithmic}
\usepackage{graphicx}
\usepackage{textcomp}
\usepackage{amsfonts}
\usepackage{graphicx}
\usepackage{epstopdf}
\usepackage{algorithmic}
\usepackage{amsopn}
\usepackage{color}
\usepackage{lipsum}
\usepackage{epstopdf}
\usepackage{amsopn}
\usepackage{enumerate}
\usepackage{cancel}
\usepackage{mathrsfs}
\usepackage{mathdots}
\usepackage{euscript}
\usepackage{amscd}
\usepackage{placeins}

\usepackage{tikz}
\usetikzlibrary{arrows}
\usepackage{verbatim}

\graphicspath{{images/}}

\newcommand{\nn}{\nonumber}

\newcommand{\bmat}{\left[ \begin{matrix}}
\newcommand{\emat}{\end{matrix} \right]}
\newcommand{\innerprod}[2]{\langle{#1},\,{#2}\rangle}

\DeclareMathOperator{\Ebb}{{\mathbb E}}
\newcommand{\Rbb}{\mathbb R}
\newcommand{\Hbb}{\mathbb H}

\newcommand{\Dbb}{\mathbb D}

\newcommand{\Nbb}{\mathbb N}
\newcommand{\Zbb}{\mathbb Z}

\newcommand{\Tbb}{\mathbb T}


\newcommand{\yb}{\mathbf  y}
\newcommand{\zb}{\mathbf  z}

\newcommand{\fb}{\mathbf  f}

\newcommand{\ab}{\mathbf a}
\newcommand{\bb}{\mathbf  b}

\newcommand{\pb}{\mathbf  p}
\newcommand{\mb}{\mathbf  m}

\newcommand{\cb}{\mathbf c}
\newcommand{\qb}{\mathbf q}
\newcommand{\tb}{\mathbf t}
\newcommand{\kb}{\mathbf k}
\newcommand{\lb}{\boldsymbol{\ell}}
\newcommand{\oneb}{\mathbf 1}
\newcommand{\zerob}{\mathbf 0}

\newcommand{\Nb}{\mathbf N}


\newcommand{\thetab}{\boldsymbol{\theta}}
\newcommand{\phib}{\boldsymbol{\varphi}}

\newcommand{\zetab}{\boldsymbol{\zeta}}


\newcommand{\Pfrak}{\mathfrak{P}}

\newcommand{\Cfrak}{\mathfrak{C}}

\newcommand{\Cscr}{\mathscr{C}}

\newcommand{\Mscr}{\mathscr{M}}

\newcommand{\Lcal}{\mathcal{L}}
\newcommand{\Zcal}{\mathcal{Z}}

\newcommand{\Sc}{\mathcal{S}_+(\Tbb^d)}
\newcommand{\Scb}{\mathcal{S}(\Tbb^d)}
\renewcommand{\d}{\mathrm{d}}

\newcommand{\m}{\mu} 
\newcommand{\const}{\text{const.}}
\newcommand{\half}{\text{half}}

\ifpdf
  \DeclareGraphicsExtensions{.eps,.pdf,.png,.jpg}
\else
  \DeclareGraphicsExtensions{.eps}
\fi


\newsiamremark{remark}{Remark}
\newsiamremark{hypothesis}{Hypothesis}
\crefname{hypothesis}{Hypothesis}{Hypotheses}
\newsiamthm{claim}{Claim}
\newsiamthm{assumption}{Assumption}
\newsiamthm{conjecture}{Conjecture}
\newsiamthm{fact}{Fact}

\newsiamremark{example}{Example}
\newsiamremark{problem}{Problem}
 
\newcommand{\alg}[1]{\begin{align} #1 \end{align}}

\headers{Multidimensional Covariance and Generalized Cepstral Extension}{B. Zhu and M. Zorzi}

\title{A Well-Posed Multidimensional Rational Covariance and Generalized Cepstral Extension Problem\thanks{Submitted to the editors October 14, 2021, revised April 23 and December 30, 2022.
\funding{This work was supported in part by the National Natural Science Foundation of China under the grant number 62103453, the ``Hundred Talent'' Program of Sun Yat-sen University, and the SID project ``A Multidimensional and Multivariate Moment Problem Theory for Target Parameter Estimation in Automotive Radars'' (ZORZ\_SID19\_01) funded by the Department of Information Engineering, University of Padova.}}}


\author{
	Bin Zhu\thanks{School of Intelligent Systems Engineering, Sun Yat-sen University, Gongchang Road 66, 518107 Shenzhen, China (\email{zhub26@mail.sysu.edu.cn}).}
	\and
	Mattia Zorzi\thanks{Department of Information Engineering, University of Padova, Via Gradenigo 6/B, 35131 Padova, Italy  (\email{zorzimat@dei.unipd.it}).}
}

\begin{document}
 
\maketitle

\begin{abstract}
In the present paper we consider the problem of estimating the multidimensional power spectral density which describes a second-order stationary random field from a finite number of covariance and generalized cepstral coefficients. The latter can be framed as an optimization problem subject to multidimensional moment constraints, i.e., to search a spectral density maximizing an entropic index and matching the moments. In connection with systems and control, such a problem can also be posed as finding a multidimensional shaping filter (i.e., a linear time-invariant system) which can output a random field that has identical moments with the given data when fed with a white noise, a fundamental problem in system identification. In particular, we consider the case where the dimension of the random field is greater than two for which a satisfying theory is still missing. We propose a multidimensional moment problem which takes into account a generalized definition of the cepstral moments, together with a consistent definition of the entropy. We show that it is always possible to find a rational power spectral density matching exactly the covariances and approximately the generalized cepstral coefficients, from which a shaping filter can be constructed via spectral factorization. In plain words, our theory allows to construct a well-posed spectral estimator for any finite dimension.
\end{abstract}

\begin{keywords}
Multidimensional covariance and generalized cepstral extension, $\alpha$-divergence, trigonometric moment problem, spectral analysis.
\end{keywords}

\begin{AMS}
42A70, 30E05, 47A57, 60G12
\end{AMS}

\section{Introduction} \label{sec:introduction}

Second-order stationary random fields are characterized by multidimensional power spectral densities.
Such models are widely used in many engineering fields including systems and control, signal and image processing, etc., see e.g., \cite{bose2013multidimensional}. For instance, the measurements collected from automotive radar sensors can be modeled by a random field having a dimension $d=3$, i.e., range, velocity and azimuth dimensions \cite{ZFKZ2019fusion,van2004optimum,engels2017advances}. In those applications we have to face a spectral estimation problem which can be addressed by setting up a moment problem. More precisely, we search a multidimensional spectral density matching a finite number of moments (e.g., covariances or logarithmic moments) which are  computed from the measured data.

In the unidimensional case, i.e., $d=1$, spectral estimation problems based on covariance matching have been widely studied \cite{burg1975maximum,Kalman,musicus1985maximum,Georgiou-87,BLGM-95,BGL-98,BGL-THREE-00,Georgiou-L-03,stoica2005spectral,FMP-12,FPZ-12,Zhu-Baggio-19,Zhu-M2-LineSpec}. These paradigms search a rational spectral density which maximizes a suitable entropic index while matching a finite number of covariance lags which satisfy a positivity condition.
Then, it is also possible to embed in the problem some \emph{a priori} information by considering a divergence index \cite{FPR-08,RFP-09,Zorzi-F-12,GL-17,zhu2018wellposed,Z-TRANSP}. The majority of these divergence (and thus also the corresponding entropy) indices are connected through   
the $\alpha$-divergence \cite{Z-14rat}, $\beta$-divergence \cite{Z-14}, or $\tau$-divergence \cite{Z-15,zorzi2015interpretation}. It is important to notice that all these moment problems admit a rational solution, which leads to a finite-dimensional system (the sought-after shaping filter) after \emph{spectral factorization} {\cite{LP15}}. Moreover, it is also possible to match the logarithmic moments (also called cepstral ccoefficients {\cite{oppenheim2004frequency}}) in addition to the covariances \cite{byrnes2001cepstral}. However, it is not guaranteed that a solution does exist {to the simultaneous covariance-cepstral matching problem}. A partial remedy is to consider a regularized version of the dual optimization problem which provides a solution matching the {covariances} and approximately the cepstral coefficients \cite{enqvist2004aconvex}.

The multidimensional case has been considered recently {\cite{KLR-16multidimensional,ringh2018multidimensional,Z-MED,Liu-Zhu-2021}},
with the exception of the former works {\cite{lang1982multidimensional,lang1983spectral,ben1988dual,lev1989multidimensional,geronimo2004positive,Georgiou-06}}. The main issue is that the solution of the moment problem is not necessarily a spectral density, but rather a spectral measure that may contain a singular part \cite{RKL-16multidimensional}. In the case that only covariance matching is considered, such an issue can be addressed by considering the $\tau$-divergence family \cite{zhu2020m}: using such a divergence family, then it is guaranteed that the family of solutions to the moment problem contains at least one rational spectral density.
It is worth emphasizing the importance of rationality as a direct connection to the finite-dimensional realization theory, although spectral factorization is not always possible for positive polynomials of several variables (see \cite{geronimo2004positive,geronimo2006factorization}). Regarding the case with simultaneous covariance and cepstral matching, it is possible to regularize the dual optimization problem in the same spirit of \cite{enqvist2004aconvex}. However, the existence of a solution (i.e., a spectral density) is only guaranteed in the case {of} $d\leq 2$ \cite{RKL-16multidimensional}.

In the present paper we deal with a generalized definition of the cepstral moments, more precisely a family of cepstral moments indexed by an integer parameter $\nu$, and a consistent family of entropies indexed by $\nu$ as well. We consider the corresponding multidimensional spectral estimation problem with covariance and {generalized} cepstral matching. We derive the corresponding dual optimization problem and propose its regularized version. 
Using such a {generalized} cepstral-entropy family, it is guaranteed that the family of spectral measures corresponding to the regularized dual problem contains at least one rational spectral density that matches the covariances and approximately the {generalized} cepstral coefficients. It is important to stress that such a result holds for any $d\geq1$.

In the case that the random field is periodic, we are still able to find a solution matching the covariances and approximately the {generalized} cepstral coefficients for any $d\geq1$, see \cite{Zhu-Zorzi-21}. Such a result is not surprising because it corresponds to discretizing
the power spectral density in the frequency domain \cite{lindquist2013onthemultivariate,LPcirculant-13}, and indeed the existence of such a solution for $d>1$ is also guaranteed using the classic definitions of logarithmic moments and entropy \cite{ringh2015multidimensional,ZFKZ2019M2}. However, there is a fundamental difference between our problem and the classic one in the case of $d>2$. In the former the periodic solution approximates arbitrarily well the nonperiodic solution (provided that the period is taken sufficiently large), while in the latter this does not necessarily happen. Accordingly, our periodic formulation provides a concrete tool to compute {(approximate)} the solution of the nonperiodic problem.

The outline of the paper is as follows. In \cref{sec:classic} we review the classic covariance and cepstral extension problem, while in
\cref{sec:generalized} we introduce our generalized formulation. In \cref{sec:dual} we derive the dual optimization problem. In \cref{sec:regularization} we introduce the regularized version of the dual problem. \Cref{sec:well-posedness} shows that the regularized dual problem is well-posed in the sense of Hadamard.
\Cref{sec:circulant} considers the problem for periodic random fields whose solution approximates that in the usual nonperiodic case. Some numerical experiments are presented in \cref{sec:sysid_app}. 
Finally, in \cref{sec:conclusion} we draw the conclusions.

{\em Notation}. Let $\kb=(k_1,\ldots, k_d)$ be a vector-valued index belonging to a specific index set $\Lambda \subset \Zbb^d$ with $d\in\Nbb_+$. We shall require $\Lambda$ to have a finite cardinality, contain the all-zero index, and be symmetric with respect to the origin, namely $\kb\in\Lambda$ implies $ -\kb\in\Lambda$.
Let $\Tbb=[0, 2\pi)$ be the interval for angular frequencies and $\thetab=(\theta_1,\theta_2,\dots,\theta_d)\in \Tbb^d=[0, 2\pi)^d$. We introduce the shorthand notation $e^{i\thetab}$ for $(e^{i\theta_1},\dots,e^{i\theta_d})$ which is a point on the $d$-torus.  The functional space $L^1(\Tbb^d)$ contains absolutely integrable functions $\Phi:\Tbb^d \to \Rbb$. We conventionally write $\Phi(e^{i\thetab})$ {but} identify $\Phi$ as a function of $\thetab$ since the frequency domain $\Tbb^d$ is isomorphic to the $d$-torus. $\Scb$ denotes the set of functions in  $L^1(\Tbb^d)$ which are nonnegative, and it has a subset $\Sc$ that contains positive, coercive, and bounded functions, i.e., there exist two constants $\delta, \mu>0$ such that $\delta\leq\Phi(e^{i\thetab})\leq\mu$ for any $\thetab\in\Tbb^d$.

\section{Classic formulation}\label{sec:classic}
Let $\yb=\{\, \yb_\tb,\; \tb\in \Zbb^d\,\}$ be a zero-mean real-valued second-order stationary random field indexed by $\tb=(t_1,\ldots, t_d )\in\Zbb^d$. 
{Such a random field can be described in the frequency-domain by} the power spectral density $\Phi\in\Scb$ which can be written as a formal multidimensional Fourier series
\alg{\label{def_phi} \Phi(e^{i\thetab})=\sum_{k\in \Zbb^d} c_\kb e^{-i\innerprod{\kb}{\thetab}},}
where $c_\kb$ is the $\kb$-th covariance coefficient defined as $c_\kb :=\Ebb[\yb_\tb\yb_{\tb+\kb}]$, independent of $\tb$ due to stationarity, and $\innerprod{\kb}{\thetab} := k_1\theta_1+\cdots+k_d\theta_d$ is the usual inner product in $\Rbb^d$. The symmetry $c_{\kb}=c_{-\kb}$ follows easily from the definition of the covariance\footnote{{Notice that the theory developed in this paper holds almost verbatim for complex-valued random fields whose covariances have conjugate symmetry $c_{\kb}=c_{-\kb}^*$.}}. {Moreover, if the random field happens to be Gaussian, then it is completely characterized by the spectral density $\Phi$.}

Assume further that $\Phi$ is strictly positive on $\Tbb^d$ and the log-spectrum also admits a Fourier series (which is the case when e.g., $\Phi\in\Sc$)
\alg{ \log\Phi(e^{i\thetab})=\sum_{k\in \Zbb^d} m_\kb e^{-i\innerprod{\kb}{\thetab}},}
where $m_\kb=m_{-\kb}$ denotes the $\kb$-th  cepstral coefficient. In many applications we can measure a finite-size multisequence extracted from a realization of $\yb$ from which we can estimate a finite number of covariance and cepstral coefficients. More precisely, we choose an index set $\Lambda$ and estimate $c_\kb$ with $\kb\in \Lambda$ and $m_{\kb}$ with $\kb\in\Lambda_0:=\Lambda\setminus \{\zerob\}$\footnote{The cepstral coefficient $m_{\zerob}$ is deliberately excluded for technical reasons.}. Typically, we can take
$\Lambda=\{\, \kb=(k_1\ldots k_d )\in\Zbb^d \; : \; |k_j|\leq n_j, \; j=1,\ldots, d\,\}$,
where each integer $n_j$ is sufficiently small relative to the data length in the $j$-th dimension.
Then, we would estimate the power spectral density of $\yb$  from those $c_\kb$'s and $m_\kb$'s . The latter is a spectral estimation problem which can be formulated as follows.

\begin{problem}\label{pb_orig}
Given two sets of real numbers $\cb=\{c_\kb\}_{\kb\in\Lambda}$ such that $c_\kb=c_{-\kb}$, and $\mb=\{m_\kb\}_{\kb\in\Lambda_0}$ such that $m_\kb=m_{-\kb}$, find a \emph{rational} function $\Phi$ in $\Scb$ such that 
\alg{ \label{moment_eqns1} c_\kb &=
\int_{\Tbb^d}e^{i\innerprod{\kb}{\thetab}}\Phi(e^{i\thetab})\d\m(\thetab)\; \; \forall  \kb\in\Lambda,\\
\label{moment_eqns2}m_\kb &=
\int_{\Tbb^d}e^{i\innerprod{\kb}{\thetab}}\log\Phi(e^{i\thetab})\d\m(\thetab)\; \; \forall  \kb\in\Lambda_0,}
where 
$\d\m(\thetab)=\frac{1}{(2\pi)^d}\prod_{j=1}^{d} \d\theta_j$
is the normalized Lebesgue measure on $\Tbb^d$.
\end{problem}

The latter is referred to as a multidimensional rational covariance and cepstral extension problem. In what follows, we always assume that $\cb$ is such that there exists a power spectral density $\Phi$ whose covariance lags indexed in $\Lambda$ coincide with $\cb$; the precise condition will be given in \cref{sec:regularization}, see \cref{assump_feasible}.

A solution to \cref{pb_orig} could be given by setting up the following maximum entropy problem \cite{RKL-16multidimensional}:
\alg{ \label{ME_classic}&\underset{\Phi \in\Scb}{\max}\ \int_{\Tbb^d} \log \Phi \,\d\m \nn\\ 
& \hbox{ s.t. \cref{moment_eqns1} -- \cref{moment_eqns2} hold,}  }
where the variable $\thetab$ or $e^{i\thetab}$ is omitted in the integral when there is no danger of confusion.
Such a problem, however, may not admit a solution in general. Indeed, the dual problem is 
\alg{\label{dual_classic}&\underset{Q, P}{\min\, }\ \langle\cb,\qb\rangle-\langle\mb,\pb\rangle {+} \int_{\Tbb^d} P \log\left( \frac{P}{Q}\right)\d\m \nn\\ 
& \hbox{ s.t. } P\in \overline{\Pfrak}_{+,o},\quad Q\in\overline{\Pfrak}_+, }
where: 
\begin{itemize}
	\item the real variables $\pb=\{p_\kb\}_{\kb\in\Lambda_0}$ and $\qb=\{q_\kb\}_{\kb\in\Lambda}$ are the Lagrange multipliers associated to the constraints \cref{moment_eqns2} and \cref{moment_eqns1}, respectively;
	\item $\langle \cb, \qb\rangle : =\sum_{\kb\in\Lambda} c_\kb q_\kb$ denotes the inner product between two multisequences indexed in $\Lambda$, and in $\innerprod{\mb}{\pb}$ one fixes $p_{\zerob}=1$ and $m_{\zerob}$ arbitrarily;
	\item the Lagrange multipliers inherit the symmetry from the data $\cb$ and $\mb$ in the sense that $p_{-\kb} = p_{\kb}$ and $q_{-\kb}=q_{\kb}$, so that
	\begin{equation}\label{sym_poly_PQ}
	P(e^{i\thetab}):=\sum_{\kb\in\Lambda} p_\kb e^{-i\innerprod{\kb}{\thetab}} \quad \text{and} \quad Q(e^{i\thetab}):=\sum_{\kb\in\Lambda} q_\kb e^{-i\innerprod{\kb}{\thetab}}
	\end{equation}
	are symmetric trigonometric polynomials;
	\item the set $\overline{\Pfrak}_+$ is the closure of $\Pfrak_+$, and the latter is the cone of positive polynomials, namely
	$\Pfrak_+ := \left\{ Q(e^{i\thetab})=\sum_{\kb\in\Lambda} q_\kb e^{-i\innerprod{\kb}{\thetab}} \,:\, Q(e^{i\thetab})>0 \quad \forall\thetab\in\Tbb^d \right\}$;
	\item $\Pfrak_{+,o} := \{\, P\in\Pfrak_+ \,:\, p_{\mathbf 0}=1\, \}$ is the normalized version of $\Pfrak_{+}$, and similarly $\overline{\Pfrak}_{+,o}$ denotes the closure.
\end{itemize}
If the dual problem \cref{dual_classic} admits a solution such that  $(\hat P,\hat Q)\in \Pfrak_{+,o}\times \Pfrak_+$, then it is unique, the duality gap is equal to zero, and the primal solution is rational, i.e., of the form
\alg{\label{form_opt_log} \hat\Phi := {\hat P/\hat Q}\,; }
otherwise, if the dual solution is on the boundary of the feasible set, then  the duality gap is (usually) different from zero and it is not guaranteed that such $\hat \Phi$ solves the primal problem \cref{ME_classic}. A partial remedy is to consider the regularization approach proposed in \cite{enqvist2004aconvex}:
\alg{\label{dual_reg_log}&\underset{Q,P}{\min\, }\ \langle\cb,\qb\rangle-\langle\mb,\pb\rangle {+} \int_{\Tbb^d} P\log\left( \frac{P}{Q}\right)\d\m - \lambda \int_{\Tbb^d} \log P\,\d\m \nn\\ 
& \hbox{ s.t. } P\in \overline{\Pfrak}_{+,o},\quad Q\in\overline{\Pfrak}_+, }
where $\lambda>0$ is a regularization parameter. In the case that $d\leq 2$, its solution $(\hat P,\hat Q)\in \Pfrak_{+,o}\times \Pfrak_+$ is unique so that the corresponding rational function \cref{form_opt_log} matches $\cb$ and approximately matches $\mb$. The smaller $\lambda$ is, the better the approximation is. However, in the case that $d\geq 3$, the solution to \cref{dual_reg_log} may belong to the boundary of the feasible set. If that happens, it is not guaranteed that \cref{form_opt_log} is a solution to the primal problem \cref{ME_classic} in the sense that the covariance matching constraint \cref{moment_eqns1} is likely to be violated.

\section{Generalized problem}\label{sec:generalized}

The aim of this section is to provide a more general formulation of Problem \cref{ME_classic} in such a way that it is possible to guarantee the existence of a rational power spectral density (in the sense that it maximizes the entropy matching $\cb$ and approximately $\mb$) even in the case $d\geq 3$. The fundamental idea is to use a more general notion of the entropy as well as the cepstral coefficient.

\begin{definition}[\cite{tokuda1990generalized}] \label{def_cep_a}
Given the power spectral density $\Phi \in\Scb$ and a real number $\alpha\in [0,1]$, the generalized cepstral coefficients  are defined as 
\begin{equation}\label{def_cep_gam0}
m_{\alpha,\kb} = \begin{cases}
\int_{\Tbb^d} e^{i\innerprod{\kb}{\thetab}} \log \Phi\, \d\mu & \text{if } \alpha=0, \\
\\
\frac{1}{\alpha}   \int_{\Tbb^d} e^{i\innerprod{\kb}{\thetab}} \Phi^\alpha \d\mu   & \text{if } \alpha\in(0,1] \text{ and } \kb\neq \zerob,
\end{cases}
\end{equation}
and $m_{\alpha,\zerob} := \frac{1}{\alpha}  ( \int_{\Tbb^d} \Phi^\alpha \d\mu -1)$ for $\alpha\in(0,1]$.
\end{definition}
In the case that $\alpha=0$, we obtain the usual cepstral coefficients in \cref{pb_orig}. 
To ease the notation, we drop the subscript $\alpha$ and just write $m_\kb$ instead.

At this point we need to consider a suitable definition of the entropy which is ``consistent'' with \cref{def_cep_a}. An entropic index should measure how close the spectral density is to the normalized white noise, i.e., a random field having a spectral density identically equal to one. 
For this purpose, we consider the $\alpha$-divergence between the spectral densities $\Phi,\Psi\in\Sc$ \cite{Z-14rat}: 
   \alg{\label{def_alpha}\Dbb_\alpha(\Phi\|\Psi)=
 \begin{cases}
	\int_{\Tbb^d} \left[ \Psi\log\left(\frac{\Psi}{\Phi}\right)-\Psi+\Phi \right] \d\m & \text{if } \alpha=0, \\
	\\
	\int_{\Tbb^d} \left[ \frac{1}{\alpha(\alpha-1)}\Phi^\alpha \Psi^{1-\alpha} + \frac{1}{1-\alpha}\Phi + \frac{1}{\alpha}\Psi \right] \d\m   & \text{if } \alpha\in\Rbb \setminus\{0,1\},\\ 
	\\
	\int_{\Tbb^d} \left[ \Phi\log\left(\frac{\Phi}{\Psi}\right)-\Phi+\Psi \right] \d\m & \text{if } \alpha=1.
	\end{cases} 
   }
 Then, we define as an entropy of $\Phi$:
	\alg{\label{def_alfa_entro} \widetilde{\Hbb}_\alpha(\Phi)&=-\Dbb_\alpha(\Phi\|{1})+\frac{1}{1-\alpha}\left(\int_{\Tbb^d}\Phi\,\d\mu - 1\right)}
	with $\alpha\in\Rbb\setminus\{1\}$.
In what follows, we consider the parametrization $\alpha=1-\nu^{-1}$ 
with an integer $\nu\geq 1$. Thus, we have
\alg{\label{gen_def}m_\kb&=
\begin{cases}
	 \int_{\Tbb^d}e^{i\innerprod{\kb}{\thetab}}\log \Phi \d\m & \text{if } \nu=1,  \\ \\
\frac{\nu}{\nu-1}\int_{\Tbb^d}e^{i\innerprod{\kb}{\thetab}}\Phi^{\frac{\nu-1}{\nu}} \d\m  & \text{if } \nu>1  \text{ and } \kb\neq \zerob,
	\end{cases} 
 \nn\\ 
  \\ 
\Hbb_\nu(\Phi)&=
\begin{cases}
	   \int_{\Tbb^d}\log \Phi \d\m & \text{if } \nu=1,  \\ \\
	\frac{\nu^2}{\nu-1} \left(\int_{\Tbb^d}\Phi^{\frac{\nu-1}{\nu}}\d\m-1\right) & \text{if } \nu>1,
	\end{cases}  
	 \nn}
which are referred to as the $\nu$-cepstral coefficient and the $\nu$-entropy, respectively. We have omitted $m_{\zerob}$ in the case of $\nu>1$ because it will not be used in the later development.

We are now ready to introduce our generalized extension problem:
\begin{subequations}\label{primal_pb}
\begin{alignat}{2}
& \underset{ \Phi\in\Scb}{\max}
& \quad &{ \Hbb_\nu(\Phi)} \nn \\
&\hspace{0.4cm} \text{s.t.}
& \quad & c_\kb=\int_{\Tbb^d}e^{i\innerprod{\kb}{\thetab}}\Phi \d\m \;\ \forall \kb\in\Lambda, \label{constraint_cov} \\
& 
& \quad & m_\kb= \frac{\nu}{\nu-1}\int_{\Tbb^d}e^{i\innerprod{\kb}{\thetab}}\Phi^{\frac{\nu-1}{\nu}} \d\m\;\ \forall \kb\in\Lambda_0. \label{constraint_cep}
\end{alignat}
\end{subequations}
In the  case $\nu= 1$, Problem \cref{primal_pb}
reduces to Problem \cref{ME_classic} because of \cref{gen_def}. 
In the limit case $\nu= \infty$, Problem \cref{primal_pb} reduces to the following one:
\begin{equation} \label{pb_georgiou}
\begin{aligned}
& \underset{\Phi\in\Scb}{\max}
& & -\int_{\Tbb^d}  \Phi\log \Phi \,\d\m\\
&\hspace{0.6cm} \text{s.t.}
& & c_\kb=\int_{\Tbb^d}e^{i\innerprod{\kb}{\thetab}}\Phi \d\m \;\ \forall \kb\in\Lambda.\\
\end{aligned}
\end{equation} 
Indeed, regarding the objective functional, we have by \cref{def_alfa_entro}
\alg{\label{limit_Hinf}\Hbb_\nu(\Phi)=-\Dbb_{1-\nu^{-1}}(\Phi\|1)+\nu\left(\int_{\Tbb^d}\Phi \d\m-1\right).}
We can equivalently maximize $-\Dbb_{1-\nu^{-1}}(\Phi\|1)$ because the covariance constraint for $\kb=\zerob$ fixes the second term of the right hand side in \cref{limit_Hinf}, i.e., $c_{\zerob}=\int_{\Tbb^d}\Phi \d\m$.
Then, by \cref{def_alpha} we have that
\alg{\left. -\Dbb_{1-\nu^{-1}}(\Phi\|1)\,\right|_{\nu=\infty}=-\Dbb_{1}(\Phi\|1)=-\int_{\Tbb^d} \left( \Phi\log \Phi -\Phi+1\right) \,\d\m.}
In view of the zeroth moment $c_{\zerob}$, we can simply maximize the so called {\em von Neumann} entropy: $-\int_{\Tbb^d}  \Phi\log \Phi \,\d\m$.
Regarding the cepstral constraint, we have $m_\kb=\int_{\Tbb^d}e^{i\innerprod{\kb}{\thetab}}\Phi \d\m$ for $\kb\in\Lambda_0$ and $\nu=\infty$,
i.e., we just exploited the definition in \cref{def_cep_gam0} with $\alpha=1$; thus, the {generalized} cepstral constraints become the covariance constraints.

It is important to notice that Problem \cref{pb_georgiou} admits a solution (although it is not rational, see \cite{Georgiou-06}) which is possible to characterize through the (finite dimensional) dual optimization problem. We conclude that Problem \cref{primal_pb} allows to range from a problem which not necessarily admits a solution for $d\geq 3$ (case $\nu=1$) to a problem which admits a solution for any $d$ (limit case $\nu= \infty$). Accordingly, the hope is that for $d\geq 3$ fixed there exists one $\nu$ sufficiently large  such that Problem \cref{primal_pb} admits a rational solution (in the sense that it maximizes the $\nu$-entropy matching $\cb$ and approximately $\mb$). The next two sections of the paper are devoted to proving {that this is indeed the case}. 

\begin{remark}
The choice of the functional space for the spectral density function $\Phi$ in Problem \cref{primal_pb} deserves a few lines to explain. In order to guarantee the integrability of both $\Phi$ and $\Phi^{\frac{\nu-1}{\nu}}$ for $\nu>1$, where the latter one appears in the definitions of the objective functional and the $\nu$-cepstral coefficient, one should take $\Scb$, the set of nonnegative functions, as a subset of the intersection
\begin{equation}
L^1(\Tbb^d)\cap L^{\frac{\nu-1}{\nu}}(\Tbb^d).
\end{equation}
However, by a result on the inclusion relation between $L^p$ spaces \cite{Villani-85}, we have $L^{1}(\Tbb^d)\subset L^{\frac{\nu-1}{\nu}}(\Tbb^d)$, because $\Tbb^d$ has a finite Lebesgue measure and the fraction $\frac{\nu-1}{\nu}<1$. Hence the proper choice of the functional space for Problem \cref{primal_pb}, the previous intersection, reduces again to $L^1(\Tbb^d)$.
\end{remark}

\section{Dual analysis} 
\label{sec:dual}

The case $\nu=1$ has already been analyzed in \cite{RKL-16multidimensional}.
Assume now that $\nu \geq 2$ is fixed. Then, we can equivalently consider the objective functional in \cref{primal_pb} multiplied by $\nu^{-1}$ and discard the constant term not depending on $\Phi$. Thus, the Lagrangian is 
\begin{equation}\label{Lagrangian}
\begin{split}
\Lcal_\nu(\Phi,P,Q) & =\frac{\nu}{\nu-1}\int_{\Tbb^d}  \Phi^{\frac{\nu-1}{\nu}} \d \m - \sum_{\kb\in\Lambda}  \left[ q_\kb \left(\int_{\Tbb^d} e^{i\innerprod{\kb}{\thetab}} \Phi \d \m - c_{\kb}\right)^* \right] \\
 & \quad +\sum_{\kb\in\Lambda_0}  \left[ p_\kb \left(\frac{\nu}{\nu-1}\int_{\Tbb^d} e^{i\innerprod{\kb}{\thetab}} \Phi^{\frac{\nu-1}{\nu}}\d \m - m_{\kb}\right)^* \right] \\ 
 &=\frac{\nu}{\nu-1}\int_{\Tbb^d}  P \Phi^{\frac{\nu-1}{\nu}} \d \m -    \int_{\Tbb^d} Q \Phi \d \m + \langle \qb, \cb\rangle  -\langle \pb, \mb\rangle,
\end{split}
\end{equation}
where $\pb=\{p_\kb\}_{\kb\in\Lambda}$ and $\qb=\{ q_\kb\}_{\kb\in\Lambda}$ are the Lagrange multipliers such that $p_\zerob=1$, $p_{\kb}=p_{-\kb}$ and  $q_{\kb}=q_{-\kb}$, similar to the treatment after the classic formulation \cref{dual_classic}. Moreover, $P$ and $Q$
are symmetric trigonometric polynomials as given in \cref{sym_poly_PQ}. Once again we set $m_\zerob$ equal to an arbitrary but fixed number in the inner product $\innerprod{\pb}{\mb}$.

Next we fix the Lagrange multipliers $\qb$ and $\pb$, or equivalently the polynomials $Q$ and $P$, and consider the problem
\begin{equation}\label{problem_sup_Lagrangian}
\sup_{\Phi\in\Scb} \ \Lcal_\nu(\Phi,P,Q).
\end{equation}
The last two inner products in \cref{Lagrangian} do not depend on $\Phi$, and thus can be ignored in the $\sup$ problem.
We first claim that in order for the supremum to be finite, it is necessary that $Q\in\overline{\Pfrak}_{+}$. To see this, consider the following two cases.

	{\em First case.} If $Q(e^{i\phib})<0$ for some $\phib\in\Tbb^d$ and $P(e^{i\phib}) > 0$, then due to continuity, there exists a (sufficiently small) ball $B$ in $\Tbb^d$ centered in $\phib$ such that $Q(e^{i\thetab})<0$ and $P(e^{i\thetab})>0$ for all $\thetab\in B$.	
	We can take 
	\begin{equation}
	\Phi(e^{i\thetab}) = \begin{cases}
	w|Q(e^{i\thetab})| & \text{for } \thetab\in B, \\
	0 & \text{elsewhere},
	\end{cases}
	\end{equation}
	where $w$ is a positive real number. Letting $w\to\infty$, such a choice of $\Phi$ leads to 
	\begin{equation}\label{L_Phi_special}
	\begin{split}
	\Lcal_\nu(\Phi,P,Q) & = \frac{\nu}{\nu-1} \int_{B} P \Phi^{\frac{\nu-1}{\nu}}\d\m - \int_{B} Q\Phi \d\m +\const  \\
	 & > \int_{B} w|Q|^2 \d\m +\const	 \to \infty,
	\end{split}
	\end{equation}
	where the inequality holds because the integrand $P \Phi^{\frac{\nu-1}{\nu}}$ is positive on $B$ and the fraction $\frac{\nu}{\nu-1}>0$ (since we have taken the integer $\nu\geq2$).
	
	{\em Second case}. If $Q(e^{i\phib})<0$ for some $\phib\in\Tbb^d$ and $P(e^{i\phib}) \leq 0$, we can take the same $\Phi$ as in the previous {case}, and it turns out that the integral terms in \cref{L_Phi_special}, namely
	\begin{equation}\label{int_in_L}
    \int_{B} \left(\frac{\nu}{\nu-1} P \Phi^{\frac{\nu-1}{\nu}} - Q\Phi \right)\d\m
	\end{equation}
	 also tend to infinity as $w\to\infty$. To show this, we only need to compare the two functions inside the integral:
	\begin{equation}
	\begin{split}
	\left|\frac{\frac{\nu}{\nu-1} P \Phi^{\frac{\nu-1}{\nu}}}{Q \Phi} \right| & = \frac{\nu}{\nu-1} \left|\frac{P}{Q}\right| \Phi^{-\frac{1}{\nu}} = \frac{\nu}{\nu-1} \left|\frac{P}{Q}\right| (w|Q|)^{-\frac{1}{\nu}} \\
	 & \leq \frac{\nu P_{\max}}{(\nu-1)Q_{\min}^{1+\frac{1}{\nu}}} w^{-\frac{1}{\nu}} \to 0 \quad \text{as} \quad w\to\infty,
	\end{split}
	\end{equation}
	where $P_{\max}:=\max_{\thetab\in\overline{B}} |P(e^{i\thetab})|$ and $0<Q_{\min}:=\min_{\thetab\in\overline{B}} |Q(e^{i\thetab})|$ are constants.
	It is clear that the above limit holds uniformly for all $\thetab\in B$. Therefore, the integral \cref{int_in_L} is dominated by the term $\int_{B}(-Q\Phi)\d\m$ which, as we have shown in the previous case, diverges as $w\to\infty$. We conclude again that the supremum of $\Lcal_\nu(\Phi,P,Q)$ in this case is $\infty$.

Now, we assume $Q\in\overline{\Pfrak}_+$ and discuss the role played by the polynomial $P$. Consider the set
\begin{equation}
B_1 := \left\{ \thetab\in\Tbb^d : P(e^{i\thetab}) < 0 \right\}
\end{equation}
which is measurable by construction. Then we can split the domain of integration and write the Lagrangian as
\begin{equation}
\begin{split}
\Lcal_\nu(\Phi,P,Q) & = \left(\int_{B_1}  + \int_{\Tbb^d\backslash B_1} \right) \left(\frac{\nu}{\nu-1} P \Phi^{\frac{\nu-1}{\nu}} - Q\Phi \right)\d\m + \const \\
 & \leq \int_{\Tbb^d\backslash B_1} \left(\frac{\nu}{\nu-1} P \Phi^{\frac{\nu-1}{\nu}} - Q\Phi \right)\d\m + \const
\end{split}
\end{equation}
where the inequality holds because the integrand is nonpositive on $B_1$. It follows that if $\Phi$ stays positive on a subset of $B_1$ with a positive Lebesgue measure, then it cannot achieve the supremum of the Lagragian since setting $\Phi|_{B_1}=0$ a.e. increases the value of $\Lcal_\nu(\Phi,P,Q)$. Therefore, the sup problem in \cref{problem_sup_Lagrangian} is equivalent to the following one:
\begin{equation}\label{modified_Lagrangian}
\sup_{\Phi\in \Scb} \ \Lcal'_\nu(\Phi,P,Q) := \int_{\Tbb^d\backslash B_1} \left(\frac{\nu}{\nu-1} P \Phi^{\frac{\nu-1}{\nu}} - Q\Phi \right)\d\m,
\end{equation}
where $\Lcal'_\nu$ is called the modified Lagrangian. In order to exclude pathological cases, we further remove points that are in the zero sets of the polynomials $P$ and $Q$ from the domain of integration. More precisely, define the zero set of a trigonometric polynomial $P$ as
\begin{equation}\label{zero_set_P}
\Zcal(P) := \left\{ \thetab\in\Tbb^d : P(e^{i\thetab})=0 \right\}
\end{equation}
and let $\Zcal:=\Zcal(P)\cup \Zcal(Q)$. It is then well known that $\mu(\Zcal)=0$, and the modified Lagrangian 
\begin{equation}
\Lcal'_\nu(\Phi,P,Q) = \int_{S} \left(\frac{\nu}{\nu-1} P \Phi^{\frac{\nu-1}{\nu}} - Q\Phi \right)\d\m
\end{equation}
where $S:=\Tbb^d\backslash B_1\backslash \Zcal$. Apparently, the value of $\Phi$ on $\Zcal$ can be set arbitrarily since that part does not contribute to the value of the Lagrangian.

For a feasible direction $\delta\Phi\in L^1(\Tbb^d)$ such that $\Phi+\varepsilon\delta\Phi\geq0$ a.e. on $\Tbb^d$ for $\varepsilon>0$ sufficiently small, compute the directional derivative
\alg{\label{first_variation_L}
	\delta\Lcal'_\nu(\Phi,P,Q; \delta \Phi) = \int_{S}  \left( P \Phi^{-\frac{1}{\nu}} - Q \right) \delta\Phi \d \m,}
where we have interchanged the differential and integral operators.
Then we impose the equality a.e. on $S$
\begin{equation}
P \Phi^{-\frac{1}{\nu}} - Q = 0  \implies \Phi = \left({P}/{Q}\right)^\nu.
\end{equation}
The latter $\Phi$ is a positive rational function well-defined on $S$ where both $P$ and $Q$ are positive. Moreover, such $\Phi$ makes the directional derivative \cref{first_variation_L} vanish in any feasible direction, and hence is a stationary point of $\Lcal_\nu'(\Phi,P,Q)$.
It is not difficult to show that $\Lcal_\nu'(\Phi,P,Q)$ is a concave functional in $\Phi$ (via pointwise reasoning with the integrand following the idea of \cite[Lemma 4.1]{zhu2020m}) so that a stationary point is indeed a maximizer. To summarize, the supremum of the Lagrangian is attained at the function
\begin{equation}\label{Phi_nu_unreg}
\Phi_\nu = 
\begin{cases}
\left({P}/{Q}\right)^\nu & \text{for } \thetab\in S, \\
0 & \text{elsewhere}. \\
\end{cases}
\end{equation}
Plugging the above optimal form $\Phi_\nu$ into the original Lagrangian \cref{Lagrangian}, we obtain the dual function
\begin{equation}\label{dual_func}
J_{\nu}(P,Q) := \frac{1}{\nu-1}\int_{S}  \frac{P^\nu}{Q^{\nu-1}} \d \m +\langle \qb, \cb\rangle  -\langle \pb,\mb\rangle,
\end{equation}
and the dual optimization problem is 
\alg{\label{dual_pb} 
	\underset{P,Q}{\min\,}\ J_{\nu}(P,Q) \ \hbox{ s.t. } p_{\zerob}=1, \quad Q\in\overline{\Pfrak}_+.
}

\begin{remark}\label{rem_J_nu_convex}
From the Lagrange duality theory (see e.g., \cite{bv_cvxbook}), we know that the dual problem \cref{dual_pb} is convex. However, it is not obvious at all why the objective function $J_{\nu}$ in \cref{dual_func} is convex, especially since the set $S$ depends on the polynomial $P$. The following argument gives an alternative route to convexity which works for an \emph{odd} $\nu\geq 2$. Consider the composite (piecewise) function defined on the upper half plane $\{(x,y)\in\Rbb^2 : y>0\}$:
\begin{equation}\label{func_g}
g(x,y):=\max\{0, x^\nu/y^{\nu-1}\} = \begin{cases}
x^\nu/y^{\nu-1} & \text{ for }\ x>0,  \\
0 & \text{ for }\ x\leq 0.
\end{cases}
\end{equation}
According to \cref{sec:appendix_A}, the function $g$ is convex on its domain of definition (which is not entirely trivial). Then the integral term in \cref{dual_func} can be rewritten as
\begin{equation}
\int_{S}  \frac{P^\nu}{Q^{\nu-1}} \d \m = \int_{\Tbb^d\backslash \Zcal(Q)} g(P(e^{i\thetab}), Q(e^{i\thetab}))\, \d\m.
\end{equation}
The convexity of this term can be proved via pointwise reasoning with the integrand in the style of Proposition~5.3 in \cite{zhu2020m}, since the integral operation is linear. The fact that $J_\nu$ is convex then follows easily.
\end{remark}

In the special case with $P\in \Pfrak_{+,o}$ and $Q\in \Pfrak_+$, the optimal form $\Phi_\nu$ in \cref{Phi_nu_unreg} becomes simply
\alg{\label{Phi_nu}\Phi_\nu = \left({P}/{Q}\right)^\nu, }
and we have the following result.

\begin{proposition} If Problem \cref{dual_pb} admits a solution $(\hat P,\hat Q)\in \Pfrak_{+,o} \times \Pfrak_+$, then the rational spectrum $\hat \Phi_\nu=( {\hat P}/{\hat Q} )^\nu$ solves Problem \cref{primal_pb}.
\end{proposition}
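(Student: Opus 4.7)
I would prove the proposition in two steps, taking advantage of the fact that $(\hat P,\hat Q)$ lies in the \emph{interior} of the dual feasible set, namely in the open set $\Pfrak_{+,o}\times \Pfrak_+$.

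\textbf{Step 1 (Feasibility of $\hat\Phi_\nu$).} Since the only binding constraint of \cref{dual_pb} at $(\hat P,\hat Q)$ is the normalization $p_{\zerob}=1$, stationarity of $J_\nu$ amounts to setting $\partial J_\nu/\partial q_\kb=0$ for all $\kb\in\Lambda$ and $\partial J_\nu/\partial p_\kb=0$ for all $\kb\in\Lambda_0$. The integrand $P^\nu/Q^{\nu-1}$ is smooth in $(P,Q)$ and uniformly bounded in a neighborhood of $(\hat P,\hat Q)$ (because $\hat Q$ is continuous and strictly positive on the compact set $\Tbb^d$), so differentiation under the integral sign is legitimate. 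Being careful with the symmetry $p_\kb=p_{-\kb}$ and $q_\kb=q_{-\kb}$, the stationary conditions read
\alg{\int_{\Tbb^d} e^{i\innerprod{\kb}{\thetab}}\left(\frac{\hat P}{\hat Q}\right)^{\nu}\d\m = c_\kb,\quad \kb\in\Lambda,\\
\frac{\nu}{\nu-1}\int_{\Tbb^d} e^{i\innerprod{\kb}{\thetab}}\left(\frac{\hat P}{\hat Q}\right)^{\nu-1}\d\m = m_\kb,\quad \kb\in\Lambda_0,}
which are precisely the moment conditions \cref{constraint_cov}--\cref{constraint_cep} with $\Phi$ replaced by $\hat\Phi_\nu=(\hat P/\hat Q)^\nu$. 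Hence $\hat\Phi_\nu$ is primal-feasible.

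\textbf{Step 2 (Optimality via weak duality).} The analysis leading to \cref{Phi_nu_unreg} already showed that, with $(\hat P,\hat Q)\in\Pfrak_{+,o}\times\Pfrak_+$ fixed, the map $\Phi\mapsto\Lcal_\nu(\Phi,\hat P,\hat Q)$ attains its supremum over $\Scb$ at $\hat\Phi_\nu$; note that the set $S$ in \cref{Phi_nu_unreg} now coincides with $\Tbb^d$ up to a set of measure zero, so the optimizer reduces to \cref{Phi_nu}. On the other hand, a direct substitution of the moment constraints into the Lagrangian \cref{Lagrangian} shows that, for any primal-feasible $\Phi$,
\alg{\Lcal_\nu(\Phi,\hat P,\hat Q) = \frac{\nu}{\nu-1}\int_{\Tbb^d}\Phi^{\frac{\nu-1}{\nu}}\d\m - m_{\zerob},}
which is an affine, strictly increasing function of $\Hbb_\nu(\Phi)$. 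Combining this identity (applied to both a generic feasible $\Phi$ and to $\hat\Phi_\nu$, using Step 1) with the Lagrangian maximizing property yields $\Hbb_\nu(\Phi)\leq \Hbb_\nu(\hat\Phi_\nu)$ for every feasible $\Phi$, so $\hat\Phi_\nu$ solves \cref{primal_pb}.

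\textbf{Main difficulty.} The only real bookkeeping occurs in Step~1, where one must reconcile the symmetric trigonometric polynomial structure (so that perturbing $p_\kb$ and $p_{-\kb}$ contribute simultaneously to $P$) with the Hermitian-like symmetry of the Fourier coefficients of the real-valued spectrum $\hat\Phi_\nu$; the factor of two picked up on the derivative side cancels against an identical factor on the data side, recovering the clean moment identities displayed above. Once this is verified, Step~2 is just the standard strong-duality argument, made possible by the fortunate fact that the unconstrained Lagrangian maximizer $\hat\Phi_\nu$ turns out to be primal-feasible.
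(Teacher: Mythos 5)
Your proposal is correct and follows essentially the same route as the paper's proof: the stationarity conditions of $J_\nu$ at an interior dual optimum $(\hat P,\hat Q)\in\Pfrak_{+,o}\times\Pfrak_+$ yield exactly the covariance and $\nu$-cepstral matching conditions for $\hat\Phi_\nu=(\hat P/\hat Q)^\nu$, and optimality then follows from the vanishing duality gap. The only difference is one of detail: the paper asserts ``the duality gap is zero'' in a single line, whereas your Step~2 spells out that argument explicitly by evaluating the Lagrangian on primal-feasible densities and using that $\hat\Phi_\nu$ simultaneously maximizes the Lagrangian and is feasible.
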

\begin{proof}  
First, for $(P,Q)\in  {\Pfrak}_{+,o} \times{\Pfrak}_+ $ the dual function $J_\nu$ takes the same form as \cref{dual_func},
and the only difference is that the domain of integration becomes the whole $\Tbb^d$.
Clearly, if $(\hat P,\hat Q)\in \Pfrak_{+,o} \times\Pfrak_+$, i.e., the solution of the dual problem lies in the interior of the feasible set,  then such a solution must be a stationary point of the convex function $J_\nu$.
Next, one can check the stationarity conditions of $J_\nu$ with respect to $P$ and $Q$ (which are similar to those in the proof of \cref{thm_main} and hence omitted here) 
in order to see that the rational function $\hat \Phi_\nu=( {\hat P}/{\hat Q} )^\nu$ satisfies the constraints in \cref{primal_pb}. Finally, the duality gap is zero, and thus the claim is proved.
\end{proof}

However, the difficult part here is that in general, it is not obvious at all why the dual problem \cref{dual_pb} should have a solution in the positive region $\Pfrak_{+,o} \times \Pfrak_+$. In order to promote such a positive solution, we shall employ a suitable \emph{regularization} technique in the next sections. As we will see, the price to pay for including the positivity constraint is that, the exact $\nu$-cepstral matching cannot be achieved.

\section{Regularizing the dual problem}\label{sec:regularization}

In the case that $\nu=1$, the remedy to guarantee the existence of a solution to the dual problem in the interior is to add a penalty term for $P$ in the dual function, see \cref{dual_reg_log}. As pointed out in \cite{enqvist2004aconvex}, the regularizer is the opposite of the entropy of $P$. In particular, if the regularization parameter $\lambda$ approaches infinity, then $P$ tends to minimize the penalty term whose minimum is attained for $P=1$ corresponding to the classical \emph{Maximum Entropy} solution {\cite{burg1975maximum}}. 
Such a penalty term, however, admits another interpretation. Indeed, we have
$- \int_{\Tbb^d}\log P \d\m =  \int_{\Tbb^d}\log\left({1}/{P}\right)\d\m =  \Hbb_1\left({1}/{P}\right)$, 
where $ \Hbb_1(\cdot) := \widetilde\Hbb_0(\cdot)$ is the usual entropy in \cref{ME_classic}.
Accordingly, the regularizer in \cref{dual_reg_log} can be also understood as the entropy of the inverse of the numerator of $\Phi=P/Q$.

Following the latter philosophy, in our case the numerator of $\Phi_\nu$ in \cref{Phi_nu} is $P^\nu$ and thus we consider as a regularizer the $\nu$-entropy of $1/P^\nu$:
\alg{
	 {\Hbb}_\nu\left(\frac{1}{P^\nu}\right) = \frac{\nu^2}{\nu-1}\int_{\Tbb^d} \left(\frac{1}{P^\nu}\right)^{\frac{\nu-1}{\nu}}\d\m = \frac{\nu^2}{\nu-1}\int_{\Tbb^d} \frac{1}{P^{\nu-1}}\d\m.
}
Thus, the regularized dual problem is
\begin{equation}
\begin{aligned}\label{reg_dual_pb}
	& \underset{P,Q}{\min\,}\ J_{\nu,\lambda}(P,Q) := J_{\nu}(P,Q) + \frac{\lambda}{\nu-1} \int_{\Tbb^d}\frac{1}{P^{\nu-1}}\d\m \\
	& \hbox{ s.t. } P\in\overline{\Pfrak}_{+,o},\quad Q\in\overline{\Pfrak}_+,
\end{aligned}
\end{equation}
where $\lambda>0$ is the regularization parameter which has absorbed the positive constant $\nu^2$. Notice that $P$ is required to belong to $\overline\Pfrak_{+,o}$ in \cref{reg_dual_pb} because the $\nu$-entropy is defined only for nonnegative functions. Next we need the following feasibility assumption.

\begin{assumption}[Feasibility]\label{assump_feasible}
	The given covariances $\{c_\kb\}_{\kb\in\Lambda}$ admit an integral representation
	$c_\kb = \int_{\Tbb^d} e^{i\innerprod{\kb}{\thetab}} \Phi_0 \d\m$ for all $\kb\in\Lambda$, where the function $\Phi_0 \in\Scb$	is strictly positive on some open ball $B_2\subset\Tbb^d$.
\end{assumption}

We are now ready to state the main result of this paper.

\begin{theorem}\label{thm_main}
	Under \cref{assump_feasible}, if $\nu\geq \frac{d}{2}+1$, then the regularized dual problem \cref{reg_dual_pb} admits a unique solution $(\hat P,\hat Q)\in \Pfrak_{+,o}\times \Pfrak_+$.
	Moreover, the positive rational function $\hat{\Phi}_\nu=(\hat{P}/\hat{Q})^\nu$ matches the given covariances $\{c_\kb\}_{\kb \in\Lambda}$ exactly and the $\nu$-cepstral coefficients $\{m_\kb\}_{\kb \in\Lambda_{{0}}}$ approximately.
\end{theorem}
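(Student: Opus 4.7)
The plan is to prove the theorem by showing that $J_{\nu,\lambda}$ is strictly convex, lower semicontinuous, and coercive on $\overline{\Pfrak}_{+,o}\times\overline{\Pfrak}_+$, and identically $+\infty$ on the boundary; the direct method then yields a unique minimizer in the interior $\Pfrak_{+,o}\times\Pfrak_+$, and the matching claims are read off from first-order conditions. Strict convexity is a pointwise verification: $(P,Q)\mapsto P^\nu/Q^{\nu-1}$ is jointly strictly convex on the positive orthant (a perspective-function computation), $P\mapsto 1/P^{\nu-1}$ is strictly convex, and the inner-product terms are affine. For coercivity, the key observation is that the normalization $p_\zerob=1$ combined with $P\geq 0$ automatically bounds $\pb$: $|p_\kb|=|\int_{\Tbb^d}P\,e^{i\innerprod{\kb}{\thetab}}\d\m|\leq\int P\,\d\m=1$. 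So coercivity reduces to the case $\|\qb\|\to\infty$. Passing to a subsequence with $\qb_n/\|\qb_n\|\to\qb_*$, the limit $Q_*$ is a nonzero nonnegative trigonometric polynomial; by \cref{assump_feasible} the function $\Phi_0$ is strictly positive on an open ball, so $\int Q_*\Phi_0\,\d\m>0$. Hence $\langle\qb_n,\cb\rangle=\int Q_n\Phi_0\,\d\m$ grows linearly in $\|\qb_n\|$ while all other terms are bounded below, driving $J_{\nu,\lambda}\to+\infty$.

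The main obstacle is the boundary blow-up, which is precisely where the dimensional condition $\nu\geq d/2+1$ enters. Suppose $P\in\partial\overline{\Pfrak}_{+,o}$, i.e., the nonnegative trigonometric polynomial $P$ has a zero at some $\thetab_0$. Since $P$ is smooth and attains its minimum at $\thetab_0$, Taylor's theorem yields a local bound $P(\thetab)\leq C|\thetab-\thetab_0|^2$, whence $1/P^{\nu-1}\geq c|\thetab-\thetab_0|^{-2(\nu-1)}$ near $\thetab_0$. Comparison with the radial integral $\int_0^{r_0}r^{d-1-2(\nu-1)}\,\d r$ shows that this fails to be integrable exactly when $2(\nu-1)\geq d$, so under the hypothesis the regularizer forces $J_{\nu,\lambda}(P,Q)=+\infty$. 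For $Q\in\partial\overline{\Pfrak}_+$ with $P$ interior, the same Taylor bound applied to $Q$ together with $P(\thetab_0)>0$ yields $P^\nu/Q^{\nu-1}\geq c|\thetab-\thetab_0|^{-2(\nu-1)}$ and the same divergence. I expect this step to be the most delicate, since it requires the quadratic Taylor bound to hold in a uniform neighborhood of every zero and must handle the multidimensional zero structure of a nonnegative trigonometric polynomial.

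Combining these observations, $J_{\nu,\lambda}$ is continuous on the open interior, takes a finite value there (pick any $(P_0,Q_0)\in\Pfrak_{+,o}\times\Pfrak_+$), is coercive in $\qb$ with $\pb$ automatically bounded, and equals $+\infty$ on the boundary; hence the infimum is attained in a compact subset of the interior, and strict convexity gives uniqueness of the minimizer $(\hat P,\hat Q)\in\Pfrak_{+,o}\times\Pfrak_+$. Finally, the first-order conditions at this interior minimizer give the matching claims. Varying $Q$ over arbitrary symmetric $\delta Q$,
\begin{equation*}
-\int_{\Tbb^d}\left(\frac{\hat P}{\hat Q}\right)^\nu\delta Q\,\d\m+\langle\delta\qb,\cb\rangle=0,
\end{equation*}
which, matching Fourier coefficients, gives $c_\kb=\int_{\Tbb^d}e^{i\innerprod{\kb}{\thetab}}\hat\Phi_\nu\,\d\m$ for all $\kb\in\Lambda$, i.e., exact covariance matching. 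Varying $P$ over $\delta P$ with $\delta p_\zerob=0$,
\begin{equation*}
\frac{\nu}{\nu-1}\int_{\Tbb^d}\left(\frac{\hat P}{\hat Q}\right)^{\nu-1}\delta P\,\d\m-\langle\delta\pb,\mb\rangle-\lambda\int_{\Tbb^d}\frac{\delta P}{\hat P^\nu}\,\d\m=0,
\end{equation*}
so for $\kb\in\Lambda_0$ one obtains $m_\kb$ equal to the $\nu$-cepstral coefficient of $\hat\Phi_\nu$ up to an $O(\lambda)$ bias $-\lambda\int_{\Tbb^d}e^{i\innerprod{\kb}{\thetab}}/\hat P^\nu\,\d\m$, giving approximate cepstral matching with the approximation improving as $\lambda\to 0^+$.
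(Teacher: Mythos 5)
Your overall architecture is the same as the paper's: strict convexity, coercivity, lower semicontinuity, and blow-up on the boundary, followed by the direct method and the first-order conditions. Your coercivity argument (bounding $|p_\kb|\le p_{\zerob}=1$ via nonnegativity of $P$, then normalizing $\qb$ and using \cref{assump_feasible} to get $\int Q_*\Phi_0\,\d\m>0$) is exactly the paper's \cref{lem_unbounded}, and your boundary analysis is a correct self-contained proof of the paper's \cref{prop_pol} (which the paper cites as a black box): since a zero of a nonnegative trigonometric polynomial is an interior minimum, the gradient vanishes and Taylor gives $P(\thetab)\le C|\thetab-\thetab_0|^2$, so $\int P^{-(\nu-1)}$ diverges precisely when $2(\nu-1)\ge d$. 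Note also that your worry about uniformity over the zero set is unnecessary: a single zero already forces divergence. The first-order conditions and the error term $\lambda\int e^{i\innerprod{\kb}{\thetab}}\hat P^{-\nu}\d\m$ match \cref{part_Q}, \cref{part_P} and \cref{cep_match_err}.

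There are two genuine gaps. First, your justification of strict convexity is wrong: $(P,Q)\mapsto P^\nu/Q^{\nu-1}$ is the perspective of $x\mapsto x^\nu$ and is therefore positively homogeneous of degree one, hence affine along rays through the origin and \emph{never} strictly convex; pointwise its second differential vanishes exactly when $P\delta Q=Q\delta P$, so the unregularized term $J_\nu$ is degenerate in those directions. The strict convexity of $J_{\nu,\lambda}$ genuinely requires the regularizer: as in the paper's \cref{reg_J_sec_vari}, the term $\lambda\int \delta P^2/P^{\nu+1}\d\m$ forces $\delta P\equiv 0$, after which the perspective term's second variation reduces to $\nu\int (P^\nu/Q^{\nu+1})\,\delta Q^2\,\d\m$ and forces $\delta Q\equiv 0$. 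Your conclusion is true but your stated mechanism is not, and the distinction matters since it is the reason the regularization is needed at all. Second, you assert lower semicontinuity but never prove it, and your final paragraph silently relies on it: coercivity gives a bounded minimizing sequence, and blow-up on the boundary alone does not prevent that sequence from accumulating at a boundary point with a finite liminf of values; you need lower semicontinuity at the boundary (the paper's \cref{lem_lower_semicont}, via Fatou's lemma applied to $P_j^\nu/Q_j^{\nu-1}$ and $1/P_j^{\nu-1}$ off a null set of zeros) to conclude that the limit would have value $+\infty$ and hence lies in the interior. Both gaps are repairable, but both must be filled.
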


The remaining part of this section is devoted to the proof of the above theorem. To this end, we need some ancillary lemmas.

\begin{lemma}\label{lem_convex}
The regularized dual function $J_{\nu,\lambda}$ is strictly convex on $\Pfrak_{+,o}\times \Pfrak_+$.
\end{lemma}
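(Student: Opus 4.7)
The plan is to decompose $J_{\nu,\lambda}(P,Q) = A(P,Q) + B(P) + C(P,Q)$, where
\[
A(P,Q) := \frac{1}{\nu-1}\int_{\Tbb^d} \frac{P^\nu}{Q^{\nu-1}} \d\m,\quad B(P) := \frac{\lambda}{\nu-1}\int_{\Tbb^d} \frac{1}{P^{\nu-1}} \d\m,\quad C(P,Q) := \langle \qb,\cb\rangle - \langle \pb,\mb\rangle,
\]
and observe that on $\Pfrak_{+,o}\times \Pfrak_+$ both $P$ and $Q$ are continuous and bounded away from zero on the compact torus, so all integrals are finite. Since $C$ is affine, strict convexity of $J_{\nu,\lambda}$ reduces to strict convexity of $A+B$.

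Next I would collect the pointwise convexity facts. For $\nu\geq 2$, the map $t\mapsto t^{-(\nu-1)}$ is strictly convex on $(0,\infty)$, so the integrand of $B$ is strictly convex in $P(\thetab)$. The two-variable map $(x,y)\mapsto x^\nu y^{1-\nu}$ on $(0,\infty)^2$ is the perspective of the convex function $f(t)=t^\nu$, hence jointly convex; for any fixed $x>0$ it is moreover strictly convex in $y$ (and symmetrically in $x$ for fixed $y$), as seen from the one-dimensional strict convexity of $y\mapsto y^{-(\nu-1)}$ multiplied by the positive constant $x^\nu$. Integration then yields that $A$ is jointly convex on $\Pfrak_{+,o}\times\Pfrak_+$ and strictly convex in each variable separately, while $B$ is strictly convex in $P$.

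To conclude strict convexity, pick any distinct $(P_0,Q_0),(P_1,Q_1)\in\Pfrak_{+,o}\times\Pfrak_+$ and set $(P_t,Q_t) := (1-t)(P_0,Q_0) + t(P_1,Q_1)$ for $t\in(0,1)$. A key auxiliary fact is that any nonzero trigonometric polynomial with frequencies in the finite set $\Lambda$ is real analytic on $\Tbb^d$ and therefore vanishes only on a Lebesgue-null set. If $P_0\neq P_1$, this forces $P_0(\thetab)\neq P_1(\thetab)$ almost everywhere, and the pointwise strict convexity of the integrand of $B$ integrates to a strict midpoint inequality for $B$; combined with the convexity of $A$, this gives the strict inequality for $J_{\nu,\lambda}$. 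In the remaining case $P_0=P_1=:P$ with $Q_0\neq Q_1$, the regularizer is constant along the segment, but since $P>0$ pointwise and $Q_0(\thetab)\neq Q_1(\thetab)$ a.e. by the same analyticity argument, the integrand $P^\nu Q^{-(\nu-1)}$ is strictly convex in $Q(\thetab)$, which again delivers strict convexity of $A+B$ along the segment.

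The only nonroutine step is the joint convexity of $(x,y)\mapsto x^\nu/y^{\nu-1}$, which can be obtained either from the standard perspective-function construction or by a direct Hessian computation (the Hessian is positive semidefinite with null direction along the ray $(x,y)$, which is precisely why $A$ alone fails to be strictly convex and the regularizer $B$ is needed). Everything else is pointwise convexity of elementary one-variable functions combined with the real-analytic a.e.-nonvanishing property of trigonometric polynomials.
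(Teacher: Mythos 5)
Your proof is correct, but it takes a genuinely different route from the paper. The paper performs a second-derivative test: it computes the joint second-order differential of $J_{\nu,\lambda}$ explicitly and arrives at the second variation
\[
\delta^2 J_{\nu,\lambda}(P,Q;\delta P,\delta Q)=\nu\int_{\Tbb^d}\left[\frac{P^{\nu-2}}{Q^{\nu+1}}\left(P\,\delta Q-Q\,\delta P\right)^2+\frac{\lambda\,\delta P^2}{P^{\nu+1}}\right]\d\m,
\]
then argues that this vanishes only if the continuous integrand is identically zero, forcing $\delta P=\delta Q\equiv 0$ and hence zero polynomials by the same ``a nonzero trigonometric polynomial cannot vanish on a set of positive measure'' fact that you invoke via real analyticity. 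You instead split $J_{\nu,\lambda}$ into the perspective-type term $A$, the regularizer $B$, and an affine part, and verify strict convexity directly along segments using pointwise strict convexity a.e.\ of the relevant one-variable restrictions. Both arguments hinge on the same two ingredients — the joint convexity of $(x,y)\mapsto x^\nu/y^{\nu-1}$ (visible in the paper's formula as the square $(P\,\delta Q-Q\,\delta P)^2$, which is exactly the degree-one-homogeneity null direction you point out) and the null-measure zero sets of nonzero trigonometric polynomials. What your route buys is that it is zeroth-order: you never have to justify two interchanges of differentiation and integration, and the perspective-function viewpoint explains conceptually why $A$ alone is merely convex and why penalizing $P$ suffices to restore strictness in both variables. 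What the paper's route buys is the explicit bilinear form \cref{reg_J_sec_diff}, which is not a throwaway computation: it is reused later as the Hessian in the well-posedness argument of \cref{thm_wellpos} and in the Newton iterations of \cref{sec:sysid_app}, so the paper gets the convexity lemma essentially for free once that formula is in hand.
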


\begin{proof}
	
	Since we are only asserting strict convexity in the interior of the feasible set $\overline{\Pfrak}_{+,o}\times\overline{\Pfrak}_+$, it suffices to carry out the derivative test for convexity. In order to ease the notation, let us introduce the linear operator
	\begin{equation}\label{Gamma_operator}
	\Gamma: \Phi \mapsto \left\{\int_{\Tbb^d} e^{i\innerprod{\kb}{\thetab}} \Phi \d \m\right\}_{\kb\in\Lambda}
	\end{equation}
	that sends a function $\Phi\in L^1(\Tbb^d)$ to its Fourier coefficients indexed in $\Lambda$.
	One can define a similar linear operator $\Gamma_0$  such that the index set $\Lambda$ in \cref{Gamma_operator} is replaced by $\Lambda_0$.
	After some computation, we get the first-order differentials of the regularized dual function:
	\begin{equation}\label{reg_J_diff}
	\begin{aligned}
	\delta J_{\nu,\lambda}(P,Q;\delta Q) & = \innerprod{\delta\qb}{\cb-\Gamma\left(\left(\tfrac{P}{Q}\right)^\nu\right)} , \\
	\delta J_{\nu,\lambda}(P,Q;\delta P) & = \innerprod{\delta\pb}{\Gamma_0\left(\tfrac{\nu}{\nu-1}\left(\tfrac{P}{Q}\right)^{\nu-1}\right) - \Gamma_0\left(\tfrac{\lambda}{P^\nu}\right) -\mb}.
	\end{aligned}
	\end{equation}
	Notice that due to the strict positivity of $P$ and $Q$, the directions $\delta P$ and $\delta Q$ can be arbitrary in a finite-dimensional vector space, and the operation of interchanging the differential and the integral can be well justified using Lebesgue's dominated convergence theorem.
	Let us continue to compute the \emph{joint} second-order differential with respect to 
	the directions $(\delta P^{(1)}, \delta Q^{(1)})$ and $(\delta P^{(2)}, \delta Q^{(2)})$:
		
	\begin{equation}\label{reg_J_sec_diff}
	\begin{aligned}
	& \delta^2 J_{\nu,\lambda} (P,Q; \delta P^{(1)}, \delta Q^{(1)}, \delta P^{(2)}, \delta Q^{(2)}) \\
	= & \nu \int_{\Tbb^d} \left[ \frac{P^{\nu-2}}{Q^{\nu+1}} (P\delta Q^{(1)} - Q\delta P^{(1)}) (P\delta Q^{(2)} - Q\delta P^{(2)}) + \frac{\lambda \delta P^{(1)} \delta P^{(2)}}{P^{\nu+1}} \right] \d\m
	\end{aligned}
	\end{equation} 
	which is a \emph{bilinear form} in $(\delta P^{(1)}, \delta Q^{(1)})$ and $(\delta P^{(2)}, \delta Q^{(2)})$. 
	Taking $\delta P^{(1)}=\delta P^{(2)}=\delta P$ and $\delta Q^{(1)}=\delta Q^{(2)}=\delta Q$, we obtain the second-order \emph{variation}:
	\begin{equation}\label{reg_J_sec_vari}
	\begin{split}
	\delta^2 J_{\nu,\lambda} (P,Q;\delta P, \delta Q) = \nu \int_{\Tbb^d} \left[ \frac{P^{\nu-2}}{Q^{\nu+1}} (P\delta Q - Q\delta P)^2 + \frac{\lambda \delta P^2}{P^{\nu+1}} \right] \d\m \geq 0.
	\end{split}
	\end{equation}
	The last inequality holds because $\nu\geq 2$ and the integrand in the bracket is nonnegative. Moreover, if we impose $\delta^2 J_{\nu,\lambda} (P,Q;\delta P, \delta Q) = 0$, then such an integrand has to vanish identically due to continuity. It follows easily that $\delta P = \delta Q \equiv 0$ which implies that they are zero polynomials (cf. e.g., \cite[Lemma~1]{ringh2015multidimensional}). We can now conclude that the second-order differential of $J_{\nu,\lambda}$ is positive definite, which completes the proof.
\end{proof}

\begin{remark}
The above lemma can be strengthened in the sense that the strict convexity of $J_{\nu,\lambda}$ can be extended to the boundary of the feasible set via similar reasoning to that in \cref{rem_J_nu_convex}. Here $J_{\nu,\lambda}$ should be understood as an extended real-valued function. To this end, fix an integer $\nu\geq 2$ and a real parameter $\lambda>0$, define the bivariate function $g_2(x,y) := x^\nu/y^{\nu-1} + \lambda/x^{\nu-1}$ for $x>0$ and $y>0$.
Then it is easy to show via the derivative test (cf.~\cref{derivatives_g_1} in \cref{sec:appendix_A}) that $g_2$ is strictly convex in its domain of definition.
Moreover, we can rewrite the regularized dual function as
\begin{equation}
J_{\nu,\lambda}(P,Q) = \frac{1}{\nu-1} \int_{\Tbb^d\backslash \Zcal} g_2(P(e^{i\thetab}), Q(e^{i\thetab})) \d\mu + \innerprod{\qb}{\cb} - \innerprod{\pb}{\mb},
\end{equation}
where $\Zcal=\Zcal(P)\cup \Zcal(Q)$ has no effect on the value of the integral.
A pointwise argument with the integrand yields the strict convexity of $J_{\nu,\lambda}$ by definition. The difference now is that $(P,Q)$ is not restricted to the interior of the feasible set.
\end{remark}

\begin{lemma}\label{lem_lower_semicont}
	The function $J_{\nu,\lambda}$ is lower-semicontinuous on $\overline{\Pfrak}_{+,o}\times \overline{\Pfrak}_+$.
\end{lemma}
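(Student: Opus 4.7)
The plan is to decompose $J_{\nu,\lambda}$ as
\[
J_{\nu,\lambda}(P,Q) = L(\pb,\qb) + \frac{1}{\nu-1}\, I_1(P,Q) + \frac{\lambda}{\nu-1}\, I_2(P),
\]
where $L(\pb,\qb) := \langle\qb,\cb\rangle-\langle\pb,\mb\rangle$ is affine, $I_1(P,Q) := \int_{\Tbb^d} P^\nu/Q^{\nu-1}\, \d\m$, and $I_2(P) := \int_{\Tbb^d} 1/P^{\nu-1}\, \d\m$, with the convention that each integral takes the value $+\infty$ when the integrand fails to be summable. Because the Lagrange multipliers live in a finite-dimensional coefficient space, $L$ is continuous on $\overline{\Pfrak}_{+,o}\times\overline{\Pfrak}_+$, and since adding a continuous function to a lower-semicontinuous one preserves lower semicontinuity, the task reduces to establishing lower semicontinuity of $I_1$ and $I_2$.

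For both functionals I intend to invoke Fatou's lemma. The crucial observation is that convergence $(P_n,Q_n)\to (P,Q)$ in $\overline{\Pfrak}_{+,o}\times\overline{\Pfrak}_+$ amounts to convergence of coefficient vectors in a finite-dimensional space, which entails uniform convergence on $\Tbb^d$ (the trigonometric monomials being bounded) and hence pointwise convergence of $P_n(e^{i\thetab})$ and $Q_n(e^{i\thetab})$ for every $\thetab$. Next I would introduce the extended-real-valued auxiliary functions
\[
f(u,v) := \begin{cases} u^\nu/v^{\nu-1} & \text{if } v>0, \\ 0 & \text{if } u=v=0, \\ +\infty & \text{if } u>0,\ v=0, \end{cases}
\qquad
g(u) := \begin{cases} u^{-(\nu-1)} & \text{if } u>0, \\ +\infty & \text{if } u=0, \end{cases}
\]
and verify by a short case analysis that they are nonnegative and lower semicontinuous on $[0,\infty)^2$ and $[0,\infty)$ respectively: continuity in the open region $v>0$ (resp. $u>0$) is clear; at a boundary point where the denominator vanishes and the numerator does not, any approaching sequence with positive denominators forces the ratio to blow up to $+\infty$; and at $(0,0)$ one uses only nonnegativity. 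Pointwise lower semicontinuity of $f$ and $g$, combined with the pointwise convergence noted above, yields
\[
\liminf_{n\to\infty} f\bigl(P_n(e^{i\thetab}),Q_n(e^{i\thetab})\bigr) \geq f\bigl(P(e^{i\thetab}),Q(e^{i\thetab})\bigr)
\]
for every fixed $\thetab$, and analogously for $g$. Fatou's lemma applied to these nonnegative integrands delivers $\liminf_n I_1(P_n,Q_n)\geq I_1(P,Q)$ and $\liminf_n I_2(P_n)\geq I_2(P)$, which together with continuity of $L$ concludes the argument.

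The main subtle point I anticipate is reconciling the extension on the boundary with the definition of the dual function inherited from \cref{dual_func}, where the integration was originally carried out over $S=\Tbb^d\setminus B_1\setminus\Zcal$. For $(P,Q)\in\overline{\Pfrak}_{+,o}\times\overline{\Pfrak}_+$ the set $B_1=\{P<0\}$ is empty, and the zero set $\Zcal(P)\cup\Zcal(Q)$ has Lebesgue measure zero whenever $P$ and $Q$ are nonzero symmetric trigonometric polynomials (the constraint $p_{\zerob}=1$ rules out $P\equiv0$, and $Q\equiv0$ immediately gives $I_1(P,Q)=+\infty$, consistent with the claim). Hence the extended integrand differs from the original one only on a null set and the two integrals agree, so the LSC of $f$ and $g$ is exactly what is needed; everything else is a straightforward application of Fatou.
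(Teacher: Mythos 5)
Your proof is correct and follows essentially the same route as the paper's: the affine terms are continuous, and lower semicontinuity of the two integral terms follows from pointwise convergence of $P_j$ and $Q_j$ (via convergence of the coefficient vectors) together with Fatou's lemma. The only cosmetic difference is that you extend the integrands to nonnegative lower-semicontinuous functions on the closed quadrant, whereas the paper excises the measure-zero union of the zero sets of $Q$ and the $Q_j$ before applying Fatou; both devices handle the boundary points equally well.
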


\begin{proof}
	As shown in the previous lemma, the function $J_{\nu,\lambda}$ is differentiable (in fact, \emph{smooth}) in the interior of the domain, and \emph{a fortiori} continuous. Hence we only need to prove the lower-semicontinuity on the boundary of the domain. In view of the definitions of $J_\nu$ and $J_{\nu,\lambda}$ in \cref{dual_func} and \cref{reg_dual_pb}, it remains to work on the integral terms $\int_{\Tbb^d}\frac{P^\nu}{Q^{\nu-1}}\d\m$ and $\int_{\Tbb^d} \frac{1}{P^{\nu-1}}\d\m$
	since the inner products are obviously continuous. In the following, we only prove the lower-semicontinuity of the term $\int_{\Tbb^d}\frac{P^\nu}{Q^{\nu-1}}\d\m$ as the proof holds almost verbatim for the other term.
	
	For a sequence $\{(P_j, Q_j)\}_{j\geq 1}$ that tends to $(P,Q)$ on the boundary of $\overline{\Pfrak}_{+,o}\times \overline{\Pfrak}_+$, there are two cases.
	
{\em First case.} $Q\in\partial\Pfrak_{+}$ where the symbol $\partial$ denotes the boundary of a set.
		Notice that the union of the zero sets $\tilde{\Zcal}:=\bigcup_{j=1}^\infty \Zcal(Q_j) \cup \Zcal(Q)$ (which again has a Lebesgue measure zero) can be safely excluded from the domain of integration so that all the integrands are well defined. Moreover, we have $P_j^\nu/Q_j^{\nu-1}\to P^\nu/Q^{\nu-1}$ pointwise as $j\to\infty$. By Fatou's lemma \cite[p.~23]{rudin1987real}, the inequality
		\begin{equation}
		\int_{\Tbb^d\backslash\tilde{\Zcal}} \frac{P^\nu}{Q^{\nu-1}} \d\m \leq \liminf_{j\to\infty} \int_{\Tbb^d\backslash\tilde{\Zcal}} \frac{P_j^\nu}{Q_j^{\nu-1}} \d\m
		\end{equation}
		holds, which means that the integral $\int_{\Tbb^d\backslash\tilde{\Zcal}}\frac{P^\nu}{Q^{\nu-1}}\d\m$ is lower-semicontinuous at $(P,Q)$.
		
		{\em Second case.} $Q\in\Pfrak_{+}$ so that $P\in\partial\Pfrak_{+,o}$ must happen. The proof here is almost identical to that in the previous case, and thus it is omitted.

	With the lower-semicontinuity on the boundary proved, the assertion of the lemma follows.	
\end{proof}

The next lemma concerns the limit behavior of the regularized dual function as the norm of its argument goes to infinity, whose proof relies heavily on \cref{assump_feasible}. Let us define the norms of the Lagrange multipliers (dual variables) as follows:
\begin{equation}\label{P_Q_norms}
\|P\|:=\|\pb\| = \sqrt{\sum_{\kb\in\Lambda_0} |p_\kb|^2},\  \|Q\|:=\|\qb\| = \sqrt{\sum_{\kb\in\Lambda} |q_\kb|^2},
\end{equation}
and $\|(P,Q)\| := \|P\| + \|Q\|$.

\begin{lemma}\label{lem_unbounded}
	Suppose that \cref{assump_feasible} holds. If a sequence $\{(P_j,Q_j)\}_{j\geq1}\subset\overline{\Pfrak}_{+,o}\times \overline{\Pfrak}_+$ is such that $\|(P_j,Q_j)\|\to\infty$ as $j\to\infty$, then $J_{\nu,\lambda}(P_j,Q_j)\to\infty$.	
\end{lemma}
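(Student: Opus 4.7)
The plan is to rescale the sequence $(P_j,Q_j)$ by its Euclidean norm and exploit the normalization $p_{j,\zerob}=1$, together with \cref{assump_feasible}, to isolate a strictly positive surviving term at leading order.

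First, I would use \cref{assump_feasible} to rewrite $\langle\qb_j,\cb\rangle=\int_{\Tbb^d}Q_j\,\Phi_0\,\d\m$. Setting $r_j:=\|(P_j,Q_j)\|\to\infty$ and $\tilde{P}_j:=P_j/r_j$, $\tilde{Q}_j:=Q_j/r_j$, the positive homogeneity of each summand of $J_{\nu,\lambda}$ yields
\begin{equation*}
J_{\nu,\lambda}(P_j,Q_j) \;=\; r_j\,B_j \;+\; \frac{\lambda}{\nu-1}\,r_j^{-(\nu-1)}\int_{\Tbb^d}\frac{1}{\tilde{P}_j^{\,\nu-1}}\,\d\m,
\end{equation*}
where
\begin{equation*}
B_j \;:=\; \frac{1}{\nu-1}\int_{\Tbb^d}\frac{\tilde{P}_j^{\,\nu}}{\tilde{Q}_j^{\,\nu-1}}\,\d\m \;+\; \int_{\Tbb^d}\tilde{Q}_j\,\Phi_0\,\d\m \;-\; \langle\tilde{\pb}_j,\mb\rangle.
\end{equation*}
Because the regularization term is nonnegative, it suffices to show $\liminf_j B_j>0$.

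The crucial observation is that $p_{j,\zerob}=1$ together with $P_j\ge 0$ gives $\int_{\Tbb^d}\tilde{P}_j\,\d\m=1/r_j\to 0$; since $\tilde{P}_j\ge 0$, this forces $\tilde{P}_j\to 0$ in $L^1(\Tbb^d)$, and norm equivalence on the finite-dimensional space of symmetric trigonometric polynomials supported on $\Lambda$ then yields $\|\tilde{\pb}_j\|\to 0$ and $\tilde{P}_j\to 0$ uniformly. Consequently $\|\tilde{\qb}_j\|\to 1$, and after passing to a subsequence I may assume $\tilde{Q}_j\to\tilde{Q}$ uniformly with $\tilde{Q}\in\overline{\Pfrak}_+$ and $\|\tilde{\qb}\|=1$; in particular $\tilde{Q}\not\equiv 0$. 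Since $\tilde{Q}$ is a nonzero nonnegative real-analytic function on $\Tbb^d$, its zero set has Lebesgue measure zero, so $\tilde{Q}>0$ almost everywhere. Combined with $\Phi_0>0$ on the open ball $B_2$ from \cref{assump_feasible}, this yields $\int_{\Tbb^d}\tilde{Q}\Phi_0\,\d\m\ge\int_{B_2}\tilde{Q}\Phi_0\,\d\m>0$. Uniform convergence gives $\int\tilde{Q}_j\Phi_0\,\d\m\to\int\tilde{Q}\Phi_0\,\d\m$, the first summand of $B_j$ is nonnegative, and $\langle\tilde{\pb}_j,\mb\rangle\to 0$; therefore $\liminf_j B_j\ge\int_{\Tbb^d}\tilde{Q}\Phi_0\,\d\m>0$, whence $r_jB_j\to\infty$ and the claim follows (a standard subsequence--contradiction argument upgrades this to convergence of the original sequence).

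The main obstacle I anticipate is the indefinite term $-\langle\pb_j,\mb\rangle$, which a priori could diverge to $-\infty$ at the full rate $r_j$ and annihilate the covariance contribution. The resolution is that the constraint $p_{j,\zerob}=1$ combined with $P_j\ge 0$ pins the $L^1$-norm of $P_j$ at $1$, so after rescaling by the growing factor $r_j$ the polynomial $\tilde{P}_j$ must collapse to zero, demoting the cepstral term to lower order while \cref{assump_feasible} ensures that the covariance term contributes a strictly positive leading coefficient.
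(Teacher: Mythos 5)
Your proof is correct and follows essentially the same route as the paper's: both arguments hinge on the fact that $p_{j,\zerob}=1$ together with nonnegativity pins down the size of $P_j$ (you via $\int_{\Tbb^d}P_j\,\d\m=1$ and norm equivalence, the paper via the coefficient bound $|p_\kb|\le p_\zerob$), so that the cepstral term is lower order, and then both normalize $\qb_j$, extract a convergent subsequence, and invoke \cref{assump_feasible} to show the limiting covariance pairing $\int_{\Tbb^d}\tilde Q\,\Phi_0\,\d\m$ is strictly positive, forcing linear growth in $\|\qb_j\|$. Your explicit rescaling by $r_j$ and the direct measure-zero argument for the zero set of $\tilde Q$ (versus the paper's contradiction via the vanishing-on-an-open-set lemma) are only cosmetic variations.
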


\begin{proof}
	By a variant of \cite[Lemma 10]{ringh2015multidimensional}, we know that for a (Laurent) polynomial $P$ nonnegative on $\Tbb^d$, it holds that $|p_\kb|\leq p_\zerob=1$ for any index $\kb\in\Lambda_0$. Therefore, we have $\|P_j\|\leq \sqrt{|\Lambda|-1}$ for all $j$, and it must happen that $\|Q_j\|\to\infty$ in order for the combined norm of $(P_j,Q_j)$ to be unbounded.
	Observe that the integral terms in $J_{\nu,\lambda}$ are nonnegative. Hence we have
	\begin{equation}\label{J_nu_lambda_p1}
	J_{\nu,\lambda}(P_j,Q_j) \geq \innerprod{\qb_j}{\cb} -\innerprod{\pb_j}{\mb} .
	\end{equation}
	The inner product $\innerprod{\pb_j}{\mb}$ still remains bounded. So it suffices to show that $\innerprod{\qb_j}{\cb}$ blows up. It is at this point that \cref{assump_feasible} plays an important role.	
	If our covariance data satisfy the assumption, we can write 	
	\begin{equation}\label{inner_prod_q_c}
	\innerprod{\qb_j}{\cb} = \sum_{\kb\in\Lambda} q_{j,\kb} \overline{\int_{\Tbb^d} e^{i\innerprod{\kb}{\thetab}} \Phi_0 \d\m} = \int_{\Tbb^d} Q_j \Phi_0 \d\m \geq 0
	\end{equation}
	for some nonnegative function $\Phi_0$ on $\Tbb^d$.
	Define the normalized sequence $\qb^0_j := \qb_j/\|\qb_j\|$ so that $\|\qb^0_j\|=1$. It follows that $\innerprod{\qb_j^0}{\cb}=\frac{1}{\|\qb_j\|}\innerprod{\qb_j}{\cb}\geq0$, and we can define the real quantity $\eta:=\liminf_{j\to\infty} \innerprod{\qb_j^0}{\cb}\geq0$. Our target is to show that $\eta>0$. To this end, notice first that the limit inferior implies that there exists a subsequence $\{\qb^0_{j_\ell}\}_{\ell\geq1}$ such that $\eta=\lim_{\ell\to\infty} \innerprod{\qb_{j_\ell}^0}{\cb}$. Since this subsequence is bounded in norm, it has a convergent subsequence which (with an abuse of notation) we still denote with $\{\qb^0_{j_\ell}\}_{\ell\geq1}$. Define the limit $\qb^0_\infty:=\lim_{\ell\to\infty} \qb^0_{j_\ell}$. Then due to the continuity of the inner product, we have $\eta=\innerprod{\qb^0_\infty}{\cb}$. 
	
	Notice on the other hand, that the polynomials $Q^0_j(e^{i\thetab})\geq0$ for all $\thetab\in\Tbb^d$. Due to the convergence of the coefficients $\qb^0_{j_\ell}$, the polynomials $Q^0_{j_\ell}$ converges uniformly to $Q^0_\infty$ for any point in $\Tbb^d$ as $\ell\to\infty$. Thus we have $Q^0_\infty(e^{i\thetab})\geq0$ for any $\thetab\in\Tbb^d$. If $\eta=0$, then similar to \cref{inner_prod_q_c}, we have
	\begin{equation}
	0 = \eta = \innerprod{\qb_\infty^0}{\cb} = \int_{\Tbb^d} Q_\infty^0 \Phi_0 \d\m,
	\end{equation}
	which, given \cref{assump_feasible}, means that $Q^0_{\infty}(e^{i\thetab})\equiv0$ on the open ball $B_2\subset\Tbb^d$ where $\Phi_0$ is positive. Appealing to \cite[Lemma 1]{ringh2015multidimensional} again, this implies that $Q^0_\infty$ is a zero polynomial, a contradiction to the fact that $\qb^0_\infty$ is of unit norm. Therefore, we can conclude that $\eta>0$. Now we can continue \cref{J_nu_lambda_p1}:
	\begin{equation}\label{J_nu_lambda_p2}
	J_{\nu,\lambda}(P_j,Q_j) \geq \innerprod{\qb_j}{\cb} + \text{const.} = \|\qb_j\| \innerprod{\qb_j^0}{\cb} + \text{const.} > \frac{\eta}{2} \|\qb_j\| + \text{const.} \to \infty,
	\end{equation}
	where the last inequality holds by the definition of $\liminf$ for $j$ sufficiently large.
\end{proof}

	The last lemma in this section describes the behavior of $J_{\nu,\lambda}$ on the boundary of the feasible set. The proof hinges on Proposition A.4 in \cite{zhu2020m} whose statement is reported below since it will be referred to several times in the subsequent development.
	
\begin{proposition}[Proposition A.4, \cite{zhu2020m}]\label{prop_pol}
	Let $p:\Rbb^n\to\Rbb$ be a polynomial with $p(\thetab_0)=0$ and assume that $p$ is nonnegative in a $\thetab_0$-centered ball $B_\varepsilon(\thetab_0)$ for some radius $\varepsilon>0$. Then if
	$m\ge n/2$, we have that $\int_{B_\varepsilon(\thetab_0)}p^{-m}\,\d\m = \infty$.
\end{proposition}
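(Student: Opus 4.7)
The plan is to reduce to the case $\thetab_0=0$, extract the leading homogeneous component of $p$ at the origin, argue that its degree is even and at least $2$, bound $p$ above by a power of $|\thetab|$ on a small ball, and then invoke polar coordinates to show that the radial integral diverges.

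First I would translate to $\thetab_0=0$, working instead with $\tilde p(\thetab):=p(\thetab+\thetab_0)$, still a polynomial, vanishing at $0$ and nonnegative on $B_\varepsilon(0)$. Decompose $\tilde p=\sum_{j\ge k}\tilde p_j$ into homogeneous components, where $k\ge 1$ is the smallest degree with $\tilde p_k\not\equiv 0$. The structural step, which I expect to be the main obstacle, is to show that $k$ is even and that the leading form $\tilde p_k$ is pointwise nonnegative on $\Rbb^n$. Along any ray $\thetab=tv$ with $v\in S^{n-1}$ and $|t|$ small, one has $\tilde p(tv)=t^k \tilde p_k(v)+O(t^{k+1})$; if $k$ were odd, the sign of $t^k$ would flip across $t=0$, so nonnegativity on both sides of $0$ would force $\tilde p_k(v)=0$ for every unit vector $v$, making $\tilde p_k\equiv 0$ and contradicting the choice of $k$. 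With $k$ now even, the same ray-wise expansion forces $\tilde p_k(v)\ge 0$ for every unit $v$ (let $t\to 0$), and homogeneity extends this to all of $\Rbb^n$. In particular $k\ge 2$.

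Once this is in hand, the rest is a direct computation. Compactness of $S^{n-1}$ yields $M:=\max_{|v|=1}\tilde p_k(v)<\infty$, so $\tilde p_k(\thetab)\le M|\thetab|^k$. Combined with the Taylor remainder estimate $|\tilde p(\thetab)-\tilde p_k(\thetab)|\le C|\thetab|^{k+1}$, I would shrink to a smaller ball $B_{\varepsilon'}(0)$ on which $\tilde p(\thetab)\le 2M|\thetab|^k$, and hence $\tilde p^{-m}(\thetab)\ge(2M)^{-m}|\thetab|^{-km}$. Passing to polar coordinates,
\[
\int_{B_{\varepsilon'}(0)}\tilde p^{-m}\,\d\m \;\ge\; C_n\int_0^{\varepsilon'} r^{\,n-1-km}\,\d r,
\]
where $C_n>0$ is the surface area of $S^{n-1}$ times $(2M)^{-m}$. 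This radial integral diverges exactly when $n-1-km\le -1$, i.e., $km\ge n$. Since $k\ge 2$ and the hypothesis gives $m\ge n/2$, we have $km\ge n$, and the integral is infinite, as required.

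The main difficulty lies entirely in the structural step: the pointwise nonnegativity hypothesis is exactly what upgrades the naive vanishing order $k\ge 1$ coming from $p(\thetab_0)=0$ to the sharper $k\ge 2$, and it is precisely this doubling that matches the threshold $m\ge n/2$ to the critical case $km=n$ beyond which the integral diverges. Everything downstream is a standard polar-coordinate computation that goes through uniformly in the dimension $n$.
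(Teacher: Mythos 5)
Your proof is correct. Note that the paper itself does not prove this proposition: it is imported verbatim from Proposition~A.4 of the cited reference \cite{zhu2020m}, so there is no in-paper argument to compare against line by line.

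Compared with the standard route (which is what the reference uses), your argument is a somewhat more structural variant of the same idea. The quickest path is to observe that $\thetab_0$ is an interior minimizer of $p$ on $B_\varepsilon(\thetab_0)$, hence $\nabla p(\thetab_0)=0$, and Taylor's theorem immediately gives $p(\thetab)\le C\|\thetab-\thetab_0\|^2$ on a smaller ball; the polar-coordinate computation with exponent $n-1-2m$ then finishes the proof under $m\ge n/2$. You instead extract the full leading homogeneous form $\tilde p_k$ and prove that $k$ is even and $\tilde p_k\ge 0$; this is correct (the ray-wise sign argument is sound, and $M=\max_{|v|=1}\tilde p_k(v)>0$ because $\tilde p_k\not\equiv 0$ and is nonnegative), and it yields the slightly stronger information that the vanishing order is an even integer $k\ge 2$, which would lower the divergence threshold to $m\ge n/k$ when $k>2$. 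For the proposition as stated, only $k\ge 2$ is used, so the two approaches buy the same conclusion; yours costs a little more bookkeeping (the homogeneous decomposition and the remainder estimate $|\tilde p-\tilde p_k|\le C|\thetab|^{k+1}$) but makes the mechanism --- nonnegativity doubling the naive vanishing order from $1$ to $2$ --- completely explicit. The only degenerate case worth a sentence is $p\equiv 0$ on the ball, where the conclusion holds trivially.
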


\begin{lemma}\label{lem_boundary}
	If $\nu\geq \frac{d}{2}+1$, then $J_{\nu,\lambda}(P,Q)=\infty$ for any $(P,Q)$ on the boundary of the feasible set $\overline{\Pfrak}_{+,o}\times \overline{\Pfrak}_+$.
\end{lemma}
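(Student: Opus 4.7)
The plan is to split $\partial(\overline{\Pfrak}_{+,o}\times\overline{\Pfrak}_+) = (\partial\overline{\Pfrak}_{+,o}\times\overline{\Pfrak}_+)\cup(\Pfrak_{+,o}\times\partial\overline{\Pfrak}_+)$ into two cases depending on which polynomial acquires a zero on $\Tbb^d$, and in each case exhibit one of the two integral terms in $J_{\nu,\lambda}$ that blows up. Note that the inner products $\innerprod{\qb}{\cb}-\innerprod{\pb}{\mb}$ are fixed finite numbers and both integrands $P^\nu/Q^{\nu-1}$ and $1/P^{\nu-1}$ are pointwise nonnegative on their domains of definition, so producing a single divergent term is enough.

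\textbf{Case 1: $P\in\partial\overline{\Pfrak}_{+,o}$.} Since $P$ is nonnegative on $\Tbb^d$ with $p_{\zerob}=1$ and has at least one zero $\thetab_0\in\Tbb^d$, I would apply \cref{prop_pol} to $P$ with $n=d$ and $m=\nu-1$; the hypothesis $\nu\geq d/2+1$ is exactly $m\geq n/2$, so
\[
\int_{B_\varepsilon(\thetab_0)} P^{-(\nu-1)}\,\d\m=\infty.
\]
Because $\lambda>0$, the regularization term $\tfrac{\lambda}{\nu-1}\int_{\Tbb^d}P^{-(\nu-1)}\,\d\m$ is then $+\infty$, forcing $J_{\nu,\lambda}(P,Q)=\infty$ irrespective of $Q$.

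\textbf{Case 2: $P\in\Pfrak_{+,o}$ and $Q\in\partial\overline{\Pfrak}_+$.} Now $Q$ vanishes at some $\thetab_0\in\Tbb^d$ while $P$ is continuous and strictly positive, hence bounded below by some $\delta>0$ on a ball $B_\varepsilon(\thetab_0)$. Therefore
\[
\int_{\Tbb^d\setminus\Zcal}\frac{P^\nu}{Q^{\nu-1}}\,\d\m
\;\geq\;\delta^\nu\int_{B_\varepsilon(\thetab_0)\setminus\Zcal(Q)} Q^{-(\nu-1)}\,\d\m\;=\;\infty,
\]
by another application of \cref{prop_pol} to $Q$ with the same parameters. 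Thus the first integral term in $J_{\nu,\lambda}$ is $+\infty$, and again $J_{\nu,\lambda}(P,Q)=\infty$. Between the two cases every boundary point is covered.

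The only mildly delicate point is the invocation of \cref{prop_pol}, which is stated for ordinary polynomials on $\Rbb^n$, whereas $P$ and $Q$ are trigonometric polynomials on $\Tbb^d$. However, the proof of that proposition uses only that a nonnegative $C^2$ function vanishing at $\thetab_0$ satisfies the local upper bound $p(\thetab)\leq C\|\thetab-\thetab_0\|^2$ (from vanishing gradient and bounded Hessian), which is inherited by any smooth nonnegative function on the torus in a local real-coordinate patch; the local change of variables is measure-preserving up to a bounded Jacobian, so the divergence transfers. This is the one step that must be checked rather than simply quoted, but it is routine given the hypothesis $\nu-1\geq d/2$.
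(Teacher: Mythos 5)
Your proof is correct and follows essentially the same route as the paper's: the same two-case split ($P$ on the boundary, handled by the regularization term; $P$ interior and $Q$ on the boundary, handled by bounding $P$ below and extracting $\int Q^{-(\nu-1)}\,\d\m$), with the same invocation of \cref{prop_pol} under $\nu-1\ge d/2$. Your closing remark about transferring \cref{prop_pol} from ordinary polynomials on $\Rbb^n$ to trigonometric polynomials on $\Tbb^d$ is a legitimate point of care that the paper glosses over, but it does not change the argument.
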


\begin{proof}
	Similar to the proof of \cref{lem_lower_semicont}, we consider two cases.
	\begin{enumerate}
		\item $P\in\partial\Pfrak_{+,o}$. In this case, we have
		\begin{equation}
		J_{\nu,\lambda}(P,Q) \geq \innerprod{\qb}{\cb} - \innerprod{\pb}{\mb} + \frac{\lambda}{\nu-1} \int_{\Tbb^d}\frac{1}{P^{\nu-1}} \d\m,
		\end{equation}
		where the inequality holds because the integral term in $J_\nu$ is nonnegative. The latter integral diverges when $\nu-1\geq d/2$ by \cref{prop_pol} while the inner products take finite values. Hence we have $J_{\nu,\lambda}(P,Q)=\infty$.
		\item $P\in\Pfrak_{+,o}$ and $Q\in\partial\Pfrak_{+}$. Due to continuity, we have $P(e^{i\thetab})\geq P_{\min}:=\min_{\thetab\in\Tbb^d} P(e^{i\thetab})>0$.
		 Now we have
		\begin{equation}
		J_{\nu,\lambda}(P,Q) \geq J_\nu(P,Q) \geq \frac{P_{\min}^\nu}{\nu-1} \int_{\Tbb^d} \frac{1}{Q^{\nu-1}} \d\m + \innerprod{\qb}{\cb} - \innerprod{\pb}{\mb},
		\end{equation}
		where the first inequality comes from the fact that the regularization term is nonnegative. Appealing to \cref{prop_pol} again, we can conclude that the integral of $1/Q^{\nu-1}$ diverges when $\nu-1\geq d/2$. This completes the proof of the lemma.
	\end{enumerate}
\end{proof}

We are now ready to prove \cref{thm_main}.

\begin{proof}[Proof of \cref{thm_main}]
	
	For a sufficiently large real number $\beta$, consider the (nonempty) sublevel set of the regularized dual function
	\begin{equation}\label{sublevel_set_reg}
	J_{\nu,\lambda}^{-1}(-\infty,\beta] := \{ (P,Q)\in \overline{\Pfrak}_{+,o}\times \overline{\Pfrak}_+ \,:\, J_{\nu,\lambda}(P,Q)\leq\beta \}.
	\end{equation}
	We know that
	\begin{itemize}
		\item the sublevel set is closed by \cref{lem_lower_semicont};
		\item the sublevel set is bounded by \cref{lem_unbounded}.
	\end{itemize}
    Since the dual variables $(P,Q)$ live in a finite-dimensional vector space, it follows that the sublevel set is compact.	
	Since the objective function $J_{\nu,\lambda}$ is lower-semicontinuous, the existence of a minimizer is guaranteed by the extreme value theorem of Weierstrass.

	Next, with the condition $\nu\geq\frac{d}{2}+1$, \cref{lem_boundary} states that the minimizer cannot fall on the boundary of the feasible set. In other words, the optimal $(\hat{P},\hat{Q})$ belong to the interior $\Pfrak_{+,o}\times\Pfrak_{+}$, where the regularized dual function is strictly convex (\cref{lem_convex}). Therefore, such a minimizer is necessarily unique. Moreover, it must satisfy the stationarity conditions (which  are obtained by imposing the first-order differentials in \cref{reg_J_diff} to vanish):
	\alg{\label{part_Q} 
		c_\kb = \int_{\Tbb^d} e^{i\innerprod{\kb}{\thetab}} \left(\frac{P}{Q}\right)^{\nu}\d\m \; \;  \forall\kb\in\Lambda,}
	\begin{align}
	\label{part_P} 
	m_\kb = \int_{\Tbb^d} e^{i\innerprod{\kb}{\thetab}} \left(\frac{\nu}{\nu-1}\left(\frac{P}{Q}\right)^{\nu-1}-\frac{\lambda}{P^\nu} \right) \d\m  \; \;  \forall\kb\in\Lambda_0.
	\end{align}
    We conclude that the rational function $\hat{\Phi}=(\hat{P}/\hat{Q})^\nu$
	matches the covariances $\{c_\kb\}_{\kb \in\Lambda}$ exactly and the $\nu$-cepstral coefficients $\{m_\kb\}_{\kb \in\Lambda}$ approximately with an error term 
	$\varepsilon_\kb := \lambda
	\int_{\Tbb^d} e^{i\innerprod{\kb}{\thetab}} \frac{1}{{\hat P}^{\nu}}\d\m$
	which depends on the regularization parameter $\lambda$.
\end{proof}

To summarize, given $d\geq 3$, we can \emph{always} take the integer parameter $\nu\geq \frac{d}{2}+1$, so that the rational function $\hat \Phi_\nu=(\hat P/\hat Q)^{\nu}$, where $(\hat P,\hat Q)\in \Pfrak_{+,o}\times \Pfrak_+$ is the unique solution to the regularized dual problem \cref{reg_dual_pb}, 
``solves'' the primal problem \cref{primal_pb} in the sense that $\hat \Phi_\nu$ matches $\cb$ and approximately $\mb$, see \cref{part_Q} and \cref{part_P}, respectively. It is worth pointing out that the type of approximate generalized cepstral matching one can do depends on the dimension $d$ since we need to choose the parameter $\nu$ sufficiently large relative to $d/2$.

\begin{remark}

A natural question on the regularization parameter $\lambda$ is: does the solution to the regularized dual problem \cref{reg_dual_pb} converge to that of the unregularized version as $\lambda\to 0$? Such a question seems quite subtle because the dual problem loses strict convexity if we take $\lambda=0$. Indeed one can show along the lines of the proof of \cref{lem_convex} that the Hessian is only \emph{positive semidefinite} in that case so in principle, the dual problem could admit multiple solutions. A careful study (which is left for future work) needs to be carried out for the limiting behavior of the optimal solution as $\lambda$ tends to zero.
\end{remark}

\section{Well-posedness}\label{sec:well-posedness}

On the basis of \cref{thm_main}, we can further show that the regularized dual problem \cref{reg_dual_pb} is well-posed in the sense of Hadamard with respect to the \emph{data} $(\cb,\mb)$. {Moreover, the solution also depends continuously on the regularization parameter $\lambda$}. More precisely, given covariances $\cb$ satisfying \cref{assump_feasible}, $\nu$-cepstral coefficients $\mb$, a regularization parameter $\lambda>0$, and an integer $\nu\ge \frac{d}{2}+1$, \cref{thm_main} already states that the solution $(\hat{P},\hat{Q})$ to \cref{reg_dual_pb}, or equivalently, the coefficient vector of the polynomials $(\hat{\pb},\hat\qb)$, exists and is unique. Hence, to complete the proof of well-posedness, it remains to show the continuous dependence of the solution $(\hat{\pb},\hat\qb)$ on $(\cb,\mb)$.
In fact, we will show in this section that such dependence is even \emph{smooth}, that is, of class $C^\infty$ using the classical inverse and implicit function theorems. To this end, we need first to introduce the set of covariances corresponding to spectral densities of real random fields
\begin{equation}
\begin{aligned}
\Cscr_+ := \left\{ \cb=\Gamma(\Phi_0) : \Phi_0 \text{ satisfies the requirement in \cref{assump_feasible}} \right. \\
\left.\text{and }\Phi_0(e^{-i\thetab}) = \Phi_0(e^{i\thetab}) \right\}
\end{aligned}
\end{equation}
where the linear operator $\Gamma$ has been defined in \cref{Gamma_operator}. Moreover, the set of $\nu$-cepstral coefficients is simply
\begin{equation}
\Mscr := \{\mb\in\Rbb^{|\Lambda_0|} : m_{-\kb} = m_{\kb},\ \kb\in\Lambda_0\}.
\end{equation}
We are then ready to write down the so called moment mapping in a parametric form as suggested by the stationarity conditions of the regularized dual function in \cref{part_Q} and \cref{part_P}:
\begin{equation}\label{f_map}
\begin{aligned}
\fb :\  & \Pfrak_{+,o} \times \Pfrak_{+} \times \Rbb_{+} \to -\Cscr_+ \times \Mscr \\
 & \bmat \pb \\ \qb \\ \lambda \emat \mapsto \bmat -\cb \\ \mb \emat : = \bmat -\Gamma\left(\left(\frac{P}{Q}\right)^\nu\right) \\ \Gamma_0\left(\frac{\nu}{\nu-1}\left(\frac{P}{Q}\right)^{\nu-1} - \frac{\lambda}{P^\nu}\right) \emat ,
\end{aligned}
\end{equation}
where the set $-\Cscr_+ := \{-\cb : \cb\in\Cscr_+\}$. Clearly, the Lagrange multipliers $(\pb,\qb)$ and the data $(\cb,\mb)$ live in the same vector space. Moreover, if both $(\cb,\mb)\in\Cscr_+\times\Mscr$ and $\lambda>0$ are fixed, then we have
\begin{equation}\label{relation_f_J}
D J_{\nu,\lambda}(\pb,\qb) = \fb(\pb,\qb,\lambda) - (-\cb,\mb),
\end{equation}
where the differential operator $D$ applied to $J_{\nu,\lambda}$ is understood as the \emph{gradient} of the regularized dual function, and here with a slight abuse of notation we treat $J_{\nu,\lambda}$ as a function of $(\pb,\qb)$. It then follows that the stationary-point equation of the regularized dual function is equivalent to the system of nonlinear equations
\begin{equation}\label{parametric_moment_eqns}
\fb(\pb,\qb,\lambda) = (-\cb,\mb).
\end{equation}
The result of this section is phrased as follows.

\begin{theorem}\label{thm_wellpos}
	Fix an integer $\nu\ge \frac{d}{2}+1$. Then under \cref{assump_feasible}, we have:
	\begin{enumerate}
		\item For each fixed $\lambda>0$, the map $\omega(\pb,\qb) := \fb(\pb,\qb,\lambda)$ is a diffeomorphism between $\Pfrak_{+,o} \times \Pfrak_{+}$ and $-\Cscr_+ \times \Mscr$ {so the regularized dual optimization problem \cref{reg_dual_pb} is well-posed with respect to the covariance and the $\nu$-cepstral data};
		\item For each fixed vector $(-\cb,\mb)\in -\Cscr_+ \times \Mscr$, the solution $({\hat\pb}_{\lambda},{\hat\qb}_{\lambda})$ to \cref{parametric_moment_eqns} depends smoothly on $\lambda$.
	\end{enumerate}
\end{theorem}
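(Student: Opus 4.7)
The plan is to derive both parts from the classical inverse and implicit function theorems, combined with the strict convexity established in \cref{lem_convex} and the existence result \cref{thm_main}. Before anything else, I would verify that $\fb$ is of class $C^\infty$ on the open set $\Pfrak_{+,o}\times\Pfrak_+\times\Rbb_+$: in this region $P$ and $Q$ are bounded away from zero on the compact torus $\Tbb^d$, so the integrands $(P/Q)^\nu$, $(P/Q)^{\nu-1}$, $1/P^\nu$ together with all their partial derivatives in $(\pb,\qb,\lambda)$ are uniformly bounded, legitimizing differentiation under the integral sign to any order via Lebesgue's dominated convergence theorem.

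For part~1, fix $\lambda>0$ and set $\omega(\pb,\qb):=\fb(\pb,\qb,\lambda)$. The key observation from \cref{relation_f_J} is that the Jacobian $D\omega(\pb,\qb)$ equals the Hessian of $J_{\nu,\lambda}$ viewed as a smooth function of $(\pb,\qb)$. The second-order variation in \cref{reg_J_sec_vari} together with the definiteness argument concluding the proof of \cref{lem_convex} shows this Hessian is positive definite, hence invertible, at every interior point, so by the inverse function theorem $\omega$ is a local $C^\infty$ diffeomorphism around each point of its domain. I would then promote this to a \emph{global} diffeomorphism by a separate bijectivity argument: \textbf{injectivity} follows from strict convexity, since two preimages $(\pb_1,\qb_1)$ and $(\pb_2,\qb_2)$ of the same $(-\cb,\mb)$ would by \cref{relation_f_J} both be stationary points—and thus global minimizers—of the strictly convex functional $J_{\nu,\lambda}$ associated with data $(\cb,\mb)$, forcing them to coincide; \textbf{surjectivity} onto $-\Cscr_+\times\Mscr$ is exactly \cref{thm_main}, whose stationarity conditions \cref{part_Q}--\cref{part_P} of the minimizer $(\hat\pb,\hat\qb)$ read precisely $\omega(\hat\pb,\hat\qb)=(-\cb,\mb)$. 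Bijectivity combined with the local diffeomorphism property yields that $\omega^{-1}$ is globally $C^\infty$.

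For part~2, fix $(-\cb,\mb)\in -\Cscr_+\times\Mscr$ and apply the implicit function theorem to the equation $\fb(\pb,\qb,\lambda)=(-\cb,\mb)$. At any solution $(\pb_{\lambda_0},\qb_{\lambda_0},\lambda_0)$, the partial Jacobian $\partial_{(\pb,\qb)}\fb$ coincides with $D\omega$ evaluated at that point and is therefore nonsingular by the argument of part~1. This yields a $C^\infty$ local branch $\lambda\mapsto(\pb_\lambda,\qb_\lambda)$ defined in a neighborhood of $\lambda_0$, which must agree with the unique global solution guaranteed by the injectivity of part~1. Since $\lambda_0>0$ is arbitrary, the smooth dependence on $\lambda$ on all of $\Rbb_+$ follows.

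The step I expect to be the main obstacle is the upgrade from local to global diffeomorphism in part~1, since the inverse function theorem alone produces only local information; handling this cleanly relies on carefully coupling strict convexity (for injectivity) with \cref{thm_main} (for surjectivity onto the natural codomain $-\Cscr_+\times\Mscr$, not a proper subset thereof). All other steps—smoothness of $\fb$, invertibility of the Jacobian, and the implicit function argument—are routine once the Hessian analysis of \cref{lem_convex} is in place.
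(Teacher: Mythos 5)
Your proposal is correct and follows essentially the same route as the paper: identify $D\omega$ with the Hessian $D^2 J_{\nu,\lambda}$ via \cref{relation_f_J}, invoke its positive definiteness from \cref{lem_convex} to apply the inverse function theorem, combine with the bijectivity supplied by \cref{thm_main} to get a global diffeomorphism, and use the implicit function theorem for the smooth dependence on $\lambda$. Your explicit unpacking of bijectivity into injectivity-from-strict-convexity and surjectivity-from-\cref{thm_main} is just a more detailed rendering of the paper's one-line appeal to \cref{thm_main}.
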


\begin{proof}
To prove the first claim, note first that $\omega$ is a bijection by \cref{thm_main}.
{In particular, surjectivity follows since for each $(-\cb,\mb) \in -\Cscr_+ \times \Mscr$ there exists an optimal solution $(\hat{P}, \hat{Q})$ to the regularized dual problem \cref{reg_dual_pb} such that the equation $\omega(\hat\pb, \hat\qb) = (-\cb, \mb)$ represents the stationarity condition. In addition, injectivity holds since the optimal $(\hat\pb, \hat\qb)$ is unique\footnote{{If two elements in $\Pfrak_{+,o}\times \Pfrak_+$ were mapped to the same $(-\cb, \mb)\in-\Cscr_+ \times \Mscr$, they would both be stationary and hence optimal for \cref{reg_dual_pb} with these covariance and $\nu$-cepstral data, which means that the two elements must coincide.}}. It now remains to show the smoothness of $\omega$ and its inverse.
}

Clearly, $\fb$ is a smooth function on its domain so the smoothness of $\omega$ follows. It now remains to show the smoothness of the inverse $\omega^{-1}$. To this end, we notice that $D\omega(\pb,\qb) = D_{\pb,\qb}\,\fb(\pb,\qb,\lambda) = D^2 J_{\nu,\lambda}(\pb,\qb)$ where the second equality follows from \cref{relation_f_J}. The latter can be seen as the \emph{Hessian} of the regularized dual function $J_{\nu,\lambda}$ which is positive definite (by \cref{lem_convex}) and in particular, nonsingular everywhere. Using the inverse function theorem, we conclude that $\omega$ is a local diffeomorphism which plus the fact that $\omega$ is bijective, leads to the smoothness of $\omega^{-1}$, and hence the first claim.

The second claim is a direct consequence of the implicit function theorem. To see this, consider solving \cref{parametric_moment_eqns}	whose right-hand side is fixed. Clearly, the solution {$(\hat\pb,\hat\qb)$} depends on $\lambda>0$. Since $D_{\pb,\qb}\,\fb(\pb,\qb,\lambda)$
is nonsingular by the previous reasoning, the solution {$(\hat\pb_{\lambda},\hat\qb_{\lambda})$} is locally a smooth function of $\lambda$, and the second claim follows.
\end{proof}

\section{The periodic case}\label{sec:circulant}

In practice, one needs to discretize the multidimensional integrals in the optimization problems  \cref{dual_pb} or \cref{reg_dual_pb} in order to solve them numerically. In particular, the fast Fourier transform (FFT) can be used to compute the Fourier integrals on a discrete grid.
Such a discretization in the frequency domain has an interesting interpretation in the time domain (or more precisely, the ``space'' domain). Indeed, it corresponds to considering a \emph{periodic} stationary random field as explained in {\cite{LPcirculant-13,ZFKZ2019M2}}. 

Let $\Nb$ denote the vector $(N_1,N_2,\dots,N_d)\in\Nbb_+^d$, where the component $N_j$ stands for the number of equal-length partitions of the interval $[0, 2\pi)$ in the $j$-th dimension of $\Tbb^d$. Next define the index set
$\Zbb^d_\Nb:=\left\{ \lb=(\ell_1,\dots,\ell_d) : 0\leq \ell_j \leq N_j-1,\, j=1,\dots,d \right\}$
whose cardinality is $|\Nb|:=\prod_{j=1}^{d}N_j$.
We can now discretize the domain $\Tbb^d$ as
$\Tbb^d_\Nb:=\left\{ \left( \frac{2\pi}{N_1}\ell_1,\dots,\frac{2\pi}{N_d}\ell_d \right) : \lb\in\Zbb_\Nb^d \right\}$,
so the number of grid points is also $|\Nb|$.
Moreover, let us introduce the symbol $\zetab_{\lb}:=\left(\zeta_{\ell_1},\dots,\zeta_{\ell_d}\right)$ for a point in the discrete $d$-torus with $\zeta_{\ell_j}=e^{i2\pi\ell_j/N_j}$ and define $\zetab^{\kb}_{\lb}:=\prod_{j=1}^{d}\zeta_{\ell_j}^{k_j}$ which is just the complex exponential function $e^{i\innerprod{\kb}{\thetab}}$ evaluated at a grid point in $\Tbb_\Nb^d$.
We can now define a discrete measure with equal mass on the grid points in $\Tbb^d_\Nb$ as
\begin{equation}\label{discrete_measure}
\d\eta(\thetab) = \sum_{\lb\in\Zbb^d_\Nb} \delta ( \theta_1-\frac{2\pi}{N_1}\ell_1,\dots,\theta_d-\frac{2\pi}{N_d}\ell_d ) \prod_{j=1}^{d}\frac{\d\theta_j}{N_j}.
\end{equation}
The discrete version of the optimization problem \cref{primal_pb} results when we replace the normalized Lebesgue measure $\d\mu$ with the discrete measure $\d\eta$ and require the discrete spectral density $\Phi$ to be nonnegative on the grid.

Following the analysis in \cite{Zhu-Zorzi-21}, the main result for the discrete problem is summarized as follows.
\begin{theorem}\label{thm_disc_formulation}
	Assume that there exists a function $\Phi_0$ defined on the discrete grid $\Tbb_\Nb^d$ such that $\Phi_0(\zetab_{\lb})>0$ for all $\lb\in\Zbb_\Nb^d$ and the given covariances admit a representation
	$c_\kb = \frac{1}{|\Nb|} \sum_{\lb\in\Zbb^d_\Nb} \zetab_{\lb}^{\kb} \Phi_0(\zetab_{\lb})$ for all $\kb\in\Lambda$.
	Let $J_{\nu}^{\Nb}$ be the discrete version of the function $J_\nu$ in \cref{dual_func} with $\nu\geq 2$.
	Then the optimization problem	
	\begin{equation}\label{dual_pb_discrete}
	\begin{aligned}
	& \underset{P,Q}{\min}
	& & J_{\nu,\lambda}^{\Nb}(P,Q) := J_{\nu}^{\Nb}(P,Q)+\frac{\lambda}{\nu-1} \int_{\Tbb^d}\frac{1}{P^{\nu-1}}\d\eta  \\
	& \text{s.t.}
	& & Q(\zetab_{\lb})>0 \text{ and } P(\zetab_{\lb})>0 \quad \forall \lb\in\Zbb_\Nb^d
	\end{aligned}
	\end{equation}
	admits a unique solution $(\hat{P},\hat{Q})$ in the interior of the feasible set such that the discrete spectral density $\hat{\Phi}_{{\nu}}=(\hat{P}/\hat{Q})^\nu$ achieves covariance matching and approximate $\nu$-cepstral matching:
	\alg{c_\kb&= \frac{1}{|\Nb|}\sum_{\lb\in\Zbb^d_\Nb} \zetab^{\kb}_{\lb} \hat\Phi_{{\nu}}(\zetab_{\lb})  \quad \forall \kb\in\Lambda, \label{cov_match_disc}\\
	m_\kb+\varepsilon_\kb&= \frac{\nu}{\nu-1} \frac{1}{|\Nb|}\sum_{\lb\in\Zbb^d_\Nb}\zetab^{\kb}_{\lb} \hat\Phi_{{\nu}}(\zetab_{\lb})^{\frac{\nu-1}{\nu}}  \quad \forall \kb\in\Lambda_0 \label{cep_match_disc},}
	where the approximation errors in the cepstral matching are 
	$\varepsilon_\kb=  \frac{\lambda}{|\Nb|}\sum_{\lb\in\Zbb^d_\Nb}\zetab^{\kb}_{\lb}\frac{1}{{\hat P}(\zetab_{\lb})^{\nu}}$.
 \end{theorem}

\begin{remark}\label{rem_discrete_formulation}
	Notice that unlike its continuous counterpart, we do not require $\nu\geq \frac{d}{2}+1$. Indeed, in the periodic case if the optimal $\hat P$ is such that $\hat P(\zetab_\ell)>0$ then we have also $\hat Q(\zetab_\ell)>0$. It is worth noting that in the case $\nu = 1$, Problem \cref{dual_pb_discrete} coincides with the one analyzed in \cite{ringh2015multidimensional}.
\end{remark}

Next we state a qualitative result about the discrete solution as an approximation to the continuous counterpart as the grid size goes to infinity. Before that, we need to clarify a number of facts concerning the feasibility of the discrete and continuous optimization problems. These results are somewhat well-known but have been scattered in the literature. We collect them here in the form of {two} lemmas in order to pave the way for our convergence theorem.

In \cite{RKL-16multidimensional}, the feasibility of the continuous optimization problem is stated in terms of the dual cone of the cone of nonnegative polynomials $\overline{\Pfrak}_+$, defined as the closure of set 
\begin{equation}
\Cfrak_+ := \left\{ \cb : \innerprod{\cb}{\qb}>0 \text{ for all } \qb \text{ such that } Q \in \overline{\Pfrak}_+\backslash \{\zerob\} \right\},
\end{equation}	
and written as $\overline{\Cfrak}_+$.
Similarly for the discrete optimization problem, let us define
\begin{equation}
\Cfrak_+(\Nb) := \left\{ \cb : \innerprod{\cb}{\qb}>0 \text{ for all nonzero } \qb \text{ such that } Q(\zetab_{\lb})\geq 0 \  \forall\lb\in\Zbb_\Nb^d \right\}.
\end{equation}

We have the following result.

\begin{lemma}\label{fact_dual_cone}	
	The condition $\cb\in\Cfrak_+$ is equivalent to \cref{assump_feasible}. Similarly, the condition $\cb\in\Cfrak_+(\Nb)$ is equivalent to the assumption at the beginning of \cref{thm_disc_formulation}.
\end{lemma}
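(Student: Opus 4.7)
The plan is to prove each of the two equivalences by verifying the forward and backward implications separately. The continuous case and the discrete case are handled by parallel arguments, with the continuous case requiring slightly more care because one must construct an $L^1$ density from what duality alone produces, namely a nonnegative Borel measure.

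For the ``if'' direction of the continuous statement, I would assume \cref{assump_feasible} and pick any nonzero $Q \in \overline{\Pfrak}_+$. Writing
$\langle \cb, \qb \rangle = \int_{\Tbb^d} Q\,\Phi_0\, \d\m \geq 0$
already shows nonnegativity. Strict positivity would follow from the fact that a nonzero trigonometric polynomial in $\overline{\Pfrak}_+$ has zero set of Lebesgue measure zero (a variant of \cite[Lemma~1]{ringh2015multidimensional}), so $\{Q > 0\}$ is co-null in $\Tbb^d$ and in particular $\{Q>0\}\cap B_2$ has positive measure. On this set both $Q$ and $\Phi_0$ are strictly positive, hence the integral is strictly positive, giving $\cb \in \Cfrak_+$.

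For the ``only if'' direction, I would assume $\cb \in \Cfrak_+$. A Riesz-type argument (the linear functional $\qb \mapsto \langle \cb, \qb \rangle$ is strictly positive on $\overline{\Pfrak}_+ \setminus \{0\}$ and can be extended via Hahn--Banach to a positive functional on $C(\Tbb^d)$) produces a nonnegative Borel measure $\nu$ on $\Tbb^d$ whose moments indexed by $\Lambda$ coincide with $\cb$. To upgrade $\nu$ to a density, I would exploit the openness of $\Cfrak_+$: for $\delta > 0$ sufficiently small, $\cb - \delta \eb_\zerob \in \Cfrak_+$, where $\eb_\zerob$ is the moment vector of the constant function $1$. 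Letting $\nu_1$ be a nonnegative representing measure for $\cb - \delta \eb_\zerob$, I would convolve $\nu_1$ with a nonnegative trigonometric kernel $R_0$ whose Fourier coefficients equal $1$ on $\Lambda$ (such kernels exist since $\Lambda$ is finite and symmetric, e.g., via a Fejér-type construction with a subsequent small correction by a trigonometric polynomial supported in $\Lambda$, chosen small enough to preserve nonnegativity). This yields a continuous nonnegative function $\Psi$ with moments exactly $\cb - \delta \eb_\zerob$, and then $\Phi_0 := \delta + \Psi \in \Scb$ is bounded below by $\delta>0$ everywhere (so strictly positive on any open ball) and has moments $\cb$, establishing \cref{assump_feasible}.

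The discrete analogue is proved in the same spirit but is genuinely simpler, since the space of grid functions on $\Tbb^d_\Nb$ is finite-dimensional. The ``if'' direction repeats the computation with the discrete inner product, using that a nonzero $Q \in \overline{\Pfrak}_+$ cannot vanish at every grid point when $\Nb$ is sufficiently fine relative to $\Lambda$. For the ``only if'' direction, $\overline{\Cfrak}_+(\Nb)$ is a polyhedral cone in $\Rbb^{|\Lambda|}$ whose interior coincides with moment vectors of strictly positive grid functions; splitting $\cb = \delta \eb_\zerob + (\cb - \delta \eb_\zerob)$ with $\cb - \delta \eb_\zerob \in \overline{\Cfrak}_+(\Nb)$ for small $\delta>0$ and representing the second summand by a nonnegative grid function produces $\Phi_0$ with $\Phi_0(\zetab_\lb) \geq \delta$ for all $\lb$. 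The main obstacle is precisely the $L^1$-density upgrade in the continuous case; the kernel-smoothing plus polynomial-correction recipe is classical in moment problem theory but requires careful bookkeeping to guarantee that the final perturbation does not destroy nonnegativity.
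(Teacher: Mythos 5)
The paper's ``proof'' of this lemma is a one-line citation (Krein--Nudelman for $d=1$, \cite[Proposition~1]{Georgiou-06} for $d>1$), so you are reconstructing from scratch an argument the authors delegate entirely to the literature. Your two ``if'' directions and the discrete ``only if'' direction are sound: the positivity computation, the measure-zero zero set of a nonzero trigonometric polynomial, the no-aliasing caveat needed for the evaluation map on the grid to be injective, and the finite-dimensional polyhedral (Farkas-type) duality are all standard and correctly deployed.

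The continuous ``only if'' direction, however, rests on a false claim: a nonnegative kernel $R_0\in L^1(\Tbb^d)$ with $\widehat{R_0}(\kb)=1$ for \emph{all} $\kb\in\Lambda$ cannot exist once $\Lambda\neq\{\zerob\}$. For nonnegative $R_0$ one has $|\widehat{R_0}(\kb)|\le\widehat{R_0}(\zerob)$, and equality with the value $\widehat{R_0}(\zerob)$ itself forces $e^{i\innerprod{\kb}{\thetab}}=1$ a.e.\ on $\{R_0>0\}$; for $\kb\neq\zerob$ that constraint set is a null union of hyperplanes, so $R_0=0$ a.e. Hence no ``small correction to a Fej\'er-type kernel'' can work: the obstruction is a rigidity of nonnegative kernels, not the size of the perturbation (this is precisely why the de la Vall\'ee Poussin kernel fails to be nonnegative). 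The repair is to move the correction from the kernel to the final density, which is where your $\delta$-slack is actually needed: convolve $\nu_1$ with a plain product Fej\'er kernel $F_N$ to get a continuous $\Psi_N\ge0$ whose moments on $\Lambda$ converge to those of $\nu_1$ as $N\to\infty$; let $T_N$ be the trigonometric polynomial supported in $\Lambda$ carrying the residual moment error, so that $\|T_N\|_\infty\to0$; then $\Phi_0:=\delta+\Psi_N+T_N$ has exactly the moments $\cb$ and is bounded below by $\delta-\|T_N\|_\infty>0$ for $N$ large. Alternatively, you can bypass the representing measure entirely via the standard separation argument (essentially the route of \cite{Georgiou-06}): the moment vectors of strictly positive continuous densities form an open convex cone, and any point of $\Cfrak_+$ outside it would be separated by a nonzero $Q\in\overline{\Pfrak}_+$ with $\innerprod{\cb}{\qb}\le0$, a contradiction.
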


\begin{proof}
See \cite[pp.~15, 17]{KreinNudelman} for the unidimensional case and \cite[Proposition 1, p.~1059]{Georgiou-06} for the multidimensional case.
\end{proof}

\begin{remark}
	
In the unidimensional case with the index set $\Lambda=\{-n,\dots,0,\dots,n\}$, the feasibility condition can be reduced to the simple algebraic criterion that the symmetric Toeplitz matrix
\begin{equation}
\bmat c_0&c_1&\cdots& c_n\\
c_1&c_0&\cdots&c_{n-1} \\
\vdots&\ddots&\ddots&\vdots\\
c_n&\cdots&c_1&c_0
\emat
\end{equation}
is positive definite {(see e.g., the classic text \cite{Grenander_Szego})}.
\end{remark}

{The next lemma is taken from \cite{RKL-16multidimensional}. Its interpretation is such that when the grid size $\Nb$ is sufficiently large, the covariance data $\cb$ which are feasible for the continuous problem, are also feasible for the discrete problem.}

\begin{lemma}[Lemma~6.4, \cite{RKL-16multidimensional}]\label{fact_cont_disc_feasiblity}
For any $\cb\in\Cfrak_+$ there exists an integer $N_0$ such that $\cb\in\Cfrak_+(\Nb)$ for all $\min(\Nb)\geq N_0$.
\end{lemma}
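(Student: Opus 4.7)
The plan is to argue by contradiction using a compactness argument in the finite-dimensional coefficient space of trigonometric polynomials indexed in $\Lambda$.

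First, suppose for contradiction that the claim fails. Then there exists a sequence of grid vectors $\{\Nb^{(j)}\}_{j\geq 1}$ with $\min(\Nb^{(j)})\to\infty$ and associated nonzero coefficient vectors $\qb_j$ such that $Q_j(\zetab_{\lb})\geq 0$ for every $\lb\in\Zbb_{\Nb^{(j)}}^d$, but $\innerprod{\cb}{\qb_j}\leq 0$. Without loss of generality we may normalize and assume $\|\qb_j\|=1$ for every $j$, because both the grid-nonnegativity and the sign of $\innerprod{\cb}{\qb_j}$ are preserved under positive scaling.

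Since the $\qb_j$ live on the unit sphere of a finite-dimensional space, Bolzano--Weierstrass yields a subsequence (which, abusing notation, we still denote by $\qb_j$) converging to some $\qb_\infty$ with $\|\qb_\infty\|=1$; in particular $\qb_\infty\neq\zerob$. By the triangle inequality applied coefficient-wise, the corresponding trigonometric polynomials $Q_j$ converge to $Q_\infty$ \emph{uniformly} on $\Tbb^d$. The main step is now to show that $Q_\infty\in\overline{\Pfrak}_+$. Fix any $\thetab\in\Tbb^d$ and let $\varepsilon>0$. For $j$ large enough, the grid $\Tbb^d_{\Nb^{(j)}}$ is fine enough to contain a point $\thetab_j$ with $\|\thetab_j-\thetab\|$ as small as we want, and the uniform continuity of $Q_\infty$ together with uniform convergence $\|Q_j-Q_\infty\|_\infty\to 0$ gives
\[
Q_\infty(e^{i\thetab})=\lim_{j\to\infty}Q_j(e^{i\thetab_j})\geq 0,
\]
because each $Q_j(e^{i\thetab_j})\geq 0$ by the grid-nonnegativity hypothesis. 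Since $\thetab$ was arbitrary, $Q_\infty$ is nonnegative on $\Tbb^d$, hence $Q_\infty\in\overline{\Pfrak}_+\setminus\{\zerob\}$.

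Finally, by the continuity of the inner product,
\[
\innerprod{\cb}{\qb_\infty}=\lim_{j\to\infty}\innerprod{\cb}{\qb_j}\leq 0,
\]
which contradicts the assumption $\cb\in\Cfrak_+$ (whose defining condition, after passing to the closure in the usual way, guarantees $\innerprod{\cb}{\qb}>0$ for every nonzero $\qb$ with $Q\in\overline{\Pfrak}_+$; the strict positivity on this sphere follows because $\Cfrak_+$ is, by \cref{fact_dual_cone}, characterized by the open condition in \cref{assump_feasible}). This contradiction proves the existence of the desired $N_0$. The only delicate point is the passage from ``nonnegative on a dense sequence of grids'' to ``nonnegative everywhere,'' which is handled by the uniform convergence afforded by working in the finite-dimensional coefficient space.
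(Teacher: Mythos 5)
Your proof is correct. The paper itself does not prove this lemma: it simply restates Lemma~6.4 of the cited reference \cite{RKL-16multidimensional}, so there is no in-paper argument to compare against; your contradiction-plus-compactness argument is the standard one (and is essentially the route taken in that reference). All the steps check out: the negation of the claim correctly yields grids with $\min(\Nb^{(j)})\to\infty$ and unit-norm $\qb_j$ that are grid-nonnegative with $\innerprod{\cb}{\qb_j}\le 0$; convergence of coefficients in the finite-dimensional space indexed by $\Lambda$ does give uniform convergence of the polynomials; the diagonal limit $Q_j(e^{i\thetab_j})\to Q_\infty(e^{i\thetab})$ along nearest grid points is justified by uniform convergence plus continuity; and $\qb_\infty\neq\zerob$ implies $Q_\infty\not\equiv 0$ since a trigonometric polynomial vanishes identically only if all its coefficients do. One small remark: your parenthetical about ``passing to the closure in the usual way'' is unnecessary --- the definition of $\Cfrak_+$ in the paper already requires $\innerprod{\cb}{\qb}>0$ for \emph{every} nonzero $\qb$ with $Q\in\overline{\Pfrak}_+$, so the contradiction with $\innerprod{\cb}{\qb_\infty}\le 0$ is immediate and needs no appeal to \cref{fact_dual_cone}.
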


Also, we attach a subscript to the discrete measure in \cref{discrete_measure} and write it as $\d\eta_{\Nb}$ to denote its dependence on the grid size $\Nb$.
With the preparations above, we are now ready to state the convergence result.

\begin{theorem}\label{thm_converg_disc_cont}
	Suppose that the covariance and $\nu$-cepstral data $(\cb,\mb)$ are given such that \cref{assump_feasible} holds. Fix an integer $\nu\geq \frac{d}{2}+1$ and the regularization parameter $\lambda>0$. Let $(\hat{P}_{\Nb}, \hat{Q}_{\Nb})$ be the unique solution to the discrete regularized dual problem \cref{dual_pb_discrete}, and let $(\hat{P}, \hat{Q})$ be the solution to the corresponding continuous problem \cref{reg_dual_pb}. Then {the sequence of polynomial coefficients $(\hat{\pb}_{\Nb}, \hat{\qb}_{\Nb})$ is bounded in norm and has a unique limit point $(\hat{\pb}, \hat{\qb})$.}
\end{theorem}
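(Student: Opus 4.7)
The plan is to use a compactness-plus-uniqueness argument in three main steps. First, I would establish that the sequence $\{(\hat{\pb}_{\Nb}, \hat{\qb}_{\Nb})\}_{\Nb}$ is uniformly bounded. By optimality of $(\hat P_\Nb, \hat Q_\Nb)$ for the discrete problem and by evaluating at a fixed reference pair in the interior of the feasible set (e.g.\ the constant polynomials $P_0\equiv Q_0\equiv 1$), we obtain $J_{\nu,\lambda}^{\Nb}(\hat P_\Nb, \hat Q_\Nb) \leq J_{\nu,\lambda}^{\Nb}(P_0, Q_0)$, and the right-hand side converges as $\min(\Nb)\to\infty$ to the finite continuous value $J_{\nu,\lambda}(P_0, Q_0)$ by standard convergence of Riemann sums for continuous integrands. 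Invoking \cref{fact_cont_disc_feasiblity}, we have $\cb\in\Cfrak_+(\Nb)$ for all $\min(\Nb)$ sufficiently large, so that a discrete analog of \cref{lem_unbounded} applies: an unbounded sequence $\|(\hat\pb_\Nb,\hat\qb_\Nb)\|\to\infty$ would force $J_{\nu,\lambda}^{\Nb}(\hat P_\Nb, \hat Q_\Nb)\to\infty$, contradicting the uniform upper bound. In the resulting finite-dimensional bounded set, we extract a subsequence converging to some limit $(\pb^\infty,\qb^\infty)$.

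The main obstacle is the second step: showing that the limit polynomial pair $(P^\infty, Q^\infty)$ actually lies in the interior $\Pfrak_{+,o}\times \Pfrak_+$ of the \emph{continuous} feasible set, since the discrete problem only enforces positivity on the grid. Here I would combine two ingredients. On the one hand, \cref{fact_poly_posi_grid} guarantees that once $\min(\Nb)$ is large and the quantities $\min_{\thetab\in\Tbb_\Nb^d} \hat P_\Nb$ and $\min_{\thetab\in\Tbb_\Nb^d} \hat Q_\Nb$ stay bounded away from zero, $(\hat P_\Nb,\hat Q_\Nb)$ is strictly positive on the whole torus $\Tbb^d$. On the other hand, the regularization and main integral terms in $J_{\nu,\lambda}^{\Nb}$ diverge in the limit if $(P^\infty,Q^\infty)$ has a zero: by \cref{prop_pol} and the assumption $\nu\geq d/2+1$, if $P^\infty$ (resp.\ $Q^\infty$) vanished at some $\thetab^\star$, then the partial Riemann sum of $1/P_\Nb^{\nu-1}$ (resp.\ of $P_\Nb^\nu/Q_\Nb^{\nu-1}$) over a shrinking neighborhood of $\thetab^\star$ would blow up as $\min(\Nb)\to\infty$, reproducing at the discrete level the boundary divergence of \cref{lem_boundary} and contradicting the uniform upper bound established in the first step.

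Once $(P^\infty, Q^\infty) \in \Pfrak_{+,o}\times \Pfrak_+$ is secured, the third step is to pass to the limit in the discrete stationarity equations (the discrete analogs of \cref{part_Q} and \cref{part_P}). Since the integrands $(\hat P_\Nb/\hat Q_\Nb)^\nu$, $(\hat P_\Nb/\hat Q_\Nb)^{\nu-1}$, and $1/\hat P_\Nb^\nu$ are continuous on $\Tbb^d$ and converge uniformly to those built from $(P^\infty, Q^\infty)$, the corresponding Riemann sums converge to the continuous integrals. Therefore $(P^\infty, Q^\infty)$ satisfies the continuous stationarity conditions which, by \cref{thm_main}, uniquely characterize the interior minimizer $(\hat P,\hat Q)$; hence $(P^\infty, Q^\infty) = (\hat P, \hat Q)$. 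Since every convergent subsequence of $\{(\hat\pb_\Nb, \hat\qb_\Nb)\}$ has the same limit $(\hat\pb, \hat\qb)$, the whole sequence converges, proving the claim. The delicate point throughout is controlling $\min \hat P_\Nb$ and $\min \hat Q_\Nb$ uniformly in $\Nb$, so that the interior characterization propagates cleanly from the discrete to the continuous setting; this is where the condition $\nu\geq d/2+1$ is used in exactly the same manner as in the continuous theory.
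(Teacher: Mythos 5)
Your proposal is correct in outline, but it takes a genuinely different route from the paper. The paper's proof is a short application of the well-posedness result: by \cref{thm_wellpos} the moment map $\omega(\pb,\qb):=\fb(\pb,\qb,\lambda)$ of \cref{f_map} is a diffeomorphism onto $-\Cscr_+\times\Mscr$; the discrete optimality condition reads $\omega_{\Nb}(\hat\pb_{\Nb},\hat\qb_{\Nb})=(-\cb,\mb)$, the Riemann sums converge to the integrals so that $\omega(\hat\pb_{\Nb},\hat\qb_{\Nb})\to(-\cb,\mb)$, and continuity of $\omega^{-1}$ gives the claim at once. You instead run a self-contained compactness-plus-uniqueness argument: uniform boundedness from a reference pair and a discrete analogue of \cref{lem_unbounded}, exclusion of boundary limit points via \cref{prop_pol} as in \cref{lem_boundary}, passage to the limit in the discrete stationarity equations, and identification of the limit through strict convexity (\cref{lem_convex}). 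Your route is longer and requires carefully restating \cref{lem_unbounded} and \cref{lem_boundary} for a sequence of problems on refining grids (e.g., the Fatou-type lower bound $\liminf_{\Nb}$ of the Riemann sums of $1/P_{\Nb}^{\nu-1}$ near a putative zero of $P^\infty$, and the fact that grid-nonnegativity still yields $|p_\kb|\le p_{\zerob}$ for $\min(\Nb)$ large), but it is more elementary and does not presuppose \cref{thm_wellpos}. It also makes explicit something the paper's proof glosses over: the paper asserts that $\hat P_{\Nb},\hat Q_{\Nb}$ are ``bounded away from zero for any $\Nb$'' so that \cref{fact_poly_posi_grid} applies and $\omega(\hat\pb_{\Nb},\hat\qb_{\Nb})$ is well defined, and your steps 1--2 are essentially the missing justification of that uniform lower bound. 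Both arguments are valid; the paper's buys brevity by reusing the diffeomorphism, yours buys transparency about why the discrete minimizers cannot degenerate.
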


\begin{proof}
	Referring to \cref{fact_dual_cone} and \cref{fact_cont_disc_feasiblity}, we assume that $\min(\Nb)$ is sufficiently large in order to guarantee the feasibility for both discrete and continuous optimization problems.
	Since $(\hat{P}_{\Nb}, \hat{Q}_{\Nb})$ solves \cref{dual_pb_discrete}, we have $J_{\nu,\lambda}^{\Nb}(\hat P_\Nb, \hat Q_\Nb) \leq J_{\nu,\lambda}^{\Nb}(\oneb, \oneb) = \frac{1+\lambda}{\nu-1}+c_0-m_0 =: C_1$ where $\oneb$ denotes the constant function with a value one. Employing an argument similar to that in the proof of \cref{lem_unbounded}, one can show that if $\|(\hat{\pb}_\Nb, \hat{\qb}_\Nb)\|\to \infty$ (which necessarily implies that $\|\hat{\qb}_\Nb\|$ diverges) as $\min(\Nb)$ increases, then $\innerprod{\hat{\qb}_\Nb}{\cb}\to\infty$.
	Based on these two observations, we can use a discrete version of \cref{J_nu_lambda_p1} to conclude that the sequence $(\hat{\pb}_{\Nb}, \hat{\qb}_{\Nb})$ must be bounded. Therefore, it has a convergent subsequence which we still write as $(\hat{\pb}_{\Nb}, \hat{\qb}_{\Nb})\to(\hat{\pb}_\infty, \hat{\qb}_\infty)$. The limit polynomials $\hat{P}_\infty, \hat{Q}_\infty$ are \emph{nonnegative} because: (i) $\hat{P}_\Nb, \hat{Q}_\Nb$ are nonnegative on the grid $\Tbb_\Nb^d$, (ii) the grid becomes dense in $\Tbb^d$ as $\min(\Nb)\to\infty$, and (iii) the subsequence $(\hat{P}_\Nb, \hat{Q}_\Nb)$ converges uniformly to $(\hat{P}_\infty, \hat{Q}_\infty)$.
	
	Now, let us view the nonnegative measures $(\hat P_{\Nb}^{\nu}/\hat Q_{\Nb}^{\nu-1}) \d\eta_{\Nb}$, $(1/\hat P_{\Nb}^{\nu-1}) \d\eta_{\Nb}$ as \emph{linear functionals} over the Banach space of continuous functions on $\Tbb^d$. Due to the fact that $J_{\nu,\lambda}^{\Nb}(\hat P_\Nb, \hat Q_\Nb) \leq C_1$ holds for all $\Nb$, all these functionals are bounded, or equivalently, the corresponding measures have bounded \emph{total variations}. According to the Banach--Alaoglu theorem, there exists a subsubsequence of measures $(\hat P_{\Nb}^{\nu}/\hat Q_{\Nb}^{\nu-1}) \d\eta_{\Nb}$ (the same for $(1/\hat P_{\Nb}^{\nu-1}) \d\eta_{\Nb}$) that converges in the weak* topology to some measure with a bounded total variation. Observe that the rational functions $\hat P_{\Nb}^{\nu}/\hat Q_{\Nb}^{\nu-1}$ converge to $\hat P_{\infty}^{\nu}/\hat Q_{\infty}^{\nu-1}$ in any compact subset of $\Tbb^d\backslash\Zcal(\hat{Q}_\infty)$ where the zero set of $\hat{Q}_\infty$ is understood in the manner of \cref{zero_set_P}. Therefore, the weak* limit of $(\hat P_{\Nb}^{\nu}/\hat Q_{\Nb}^{\nu-1}) \d\eta_{\Nb}$ must have the form $(\hat P_{\infty}^{\nu}/\hat Q_{\infty}^{\nu-1}) \d\mu + \d\hat\gamma_1$ where $\d\hat\gamma_1$ is a nonnegative measure supported in $\Zcal(\hat{Q}_\infty)$, i.e., it is \emph{singular} with respect to the normalized Lebesgue measure $\d\mu$ and satisfies the equality $\int_{\Tbb^d} \hat{Q}_{\infty} \d\hat{\gamma}_1 = 0$. Similarly, we have
	\begin{equation}
	(1/\hat P_{\Nb}^{\nu-1}) \d\eta_{\Nb} \xrightarrow{\text{weak*}} (1/\hat P_{\infty}^{\nu-1}) \d\mu + \d\hat\gamma_2 \,\text{ with }\, \d\hat\gamma_2\geq 0 \,\text{ such that }\, \int_{\Tbb^d} \hat{P}_{\infty} \d\hat{\gamma}_2 = 0.
	\end{equation}
    By the definition of the weak* convergence, we have
    \begin{equation}
    J_{\nu,\lambda}^{\Nb}(\hat P_\Nb, \hat Q_\Nb) \to J_{\nu,\lambda}(\hat P_\infty, \hat Q_\infty) + \frac{1}{\nu-1} \int_{\Tbb^d}\d\hat{\gamma}_1 + \frac{\lambda}{\nu-1}\int_{\Tbb^d}\d\hat{\gamma}_2 \geq J_{\nu,\lambda}(\hat P_\infty, \hat Q_\infty).
    \end{equation}
	Due to fact that the sequence of real numbers $J_{\nu,\lambda}^{\Nb}(\hat P_\Nb, \hat Q_\Nb)$ is bounded by $C_1$, the inequality $J_{\nu,\lambda}(\hat P_\infty, \hat Q_\infty)\leq C_1$ holds. By \cref{lem_boundary}, the argument $(\hat P_\infty, \hat Q_\infty)$ of the regularized dual function (continuous version) cannot be on the boundary of the feasible set since otherwise the function value would be infinity, which would lead to a contradiction. Hence, we conclude that $(\hat P_\infty, \hat Q_\infty)\in \Pfrak_{+,o}\times \Pfrak_{+}$, namely both polynomials are strictly positive over $\Tbb^d$. At the same time, the singular measures $\d\hat{\gamma}_1$ and $\d\hat{\gamma}_2$ do not appear.

    Next, we move to the moment equations which give the stationarity conditions of the (discrete and continuous) regularized dual functions. Consider the discrete version $\fb_\Nb$ of the $\fb$ map in \cref{f_map}, and define similarly the map $\omega_\Nb(\pb,\qb) := \fb_\Nb(\pb,\qb,\lambda)$ for $\lambda$ fixed. It then follows from the optimality condition that $\omega_\Nb({\hat\pb_\Nb,\hat\qb_\Nb}) = (-\cb,\mb)$ and $\omega({\hat\pb,\hat\qb}) = (-\cb,\mb)$.
	Define the vector $(-\cb_\Nb,\mb_\Nb) := \omega({\hat\pb_\Nb,\hat\qb_\Nb})$ which is legitimate because in the latter integrals {at least a subsequence of} the polynomials $\hat{P}_{\Nb}$ and $\hat{Q}_{\Nb}$ will eventually become strictly positive in $\Tbb^d$ when $\min(\Nb)$ is sufficiently large, {as discussed in the previous paragraph}. 
	Next observe the relation
	$\omega_\Nb({\hat\pb_\Nb,\hat\qb_\Nb}) \to \omega({\hat\pb_\Nb,\hat\qb_\Nb})$ as the convergence of the Riemann sum to the integral, so that $(-\cb_\Nb,\mb_\Nb) \to (-\cb,\mb)$ as $\min(\Nb)\to\infty$.
	By \cref{thm_wellpos}, the map $\omega$ is a diffeomorphism so we have
	\begin{equation}\label{converg_poly_coeff}
	{(\hat\pb_\Nb,\hat\qb_\Nb)} = \omega^{-1}(-\cb_\Nb,\mb_\Nb) \to \omega^{-1}(-\cb,\mb) = {(\hat\pb,\hat\qb)}   \ \text{ as } \  \min(\Nb)\to\infty.
	\end{equation}
	Since we already have $(\hat{\pb}_{\Nb}, \hat{\qb}_{\Nb})\to(\hat{\pb}_\infty, \hat{\qb}_\infty)$ in the first part of the proof, it must happen that $(\hat{\pb}_\infty, \hat{\qb}_\infty)=(\hat\pb,\hat\qb)$.	
	
Finally, suppose that $(\hat{P}'_{\infty}, \hat{Q}'_{\infty})$ is another limit point of the sequence $(\hat{P}_{\Nb}, \hat{Q}_{\Nb})$. Then the above reasoning can be repeated to show that such a limit point must coincide with $(\hat{P},\hat{Q})$. This shows the uniqueness of the limit point and completes the proof.
\end{proof}

{Next we state a convergence result for the corresponding spectral density.}

\begin{corollary} 
Under the assumptions of the previous theorem, let {$\hat \Phi_\nu = (\hat{P}/\hat{Q})^\nu$} be the solution of Problem \cref{primal_pb} and $\hat \Phi_{\nu, \Nb} = {(\hat{P}_{\Nb}/\hat{Q}_{\Nb})^\nu}$  be the solution of the corresponding periodic problem with period $\Nb$. Then, {there exists a subsequence of the discrete measures $\hat \Phi_{\nu, \Nb}\d\eta_{\Nb}$ that} tends to $\hat\Phi_\nu \d \mu$ in weak* as $\min(\Nb)\rightarrow\infty$.
\end{corollary}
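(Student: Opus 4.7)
The plan is to promote the finite-dimensional convergence of coefficients given by Theorem~\ref{thm_converg_disc_cont} to uniform convergence of the spectral densities on $\Tbb^d$, and then deduce weak* convergence of the measures by a standard Riemann-sum argument combined with dominated estimates.

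First I would apply Theorem~\ref{thm_converg_disc_cont} to get $(\hat \pb_\Nb, \hat \qb_\Nb)\to (\hat\pb,\hat\qb)$ in norm as $\min(\Nb)\to\infty$. Since $\hat P_\Nb$ and $\hat Q_\Nb$ are trigonometric polynomials whose supports are in the finite set $\Lambda$, coefficient convergence immediately yields uniform convergence $\hat P_\Nb \to \hat P$ and $\hat Q_\Nb \to \hat Q$ on the compact torus $\Tbb^d$. Because $\hat Q\in\Pfrak_+$, there is a constant $Q_{\min}>0$ with $\hat Q(e^{i\thetab})\geq Q_{\min}$ for all $\thetab\in\Tbb^d$, so by uniform convergence $\hat Q_\Nb \geq Q_{\min}/2 > 0$ on $\Tbb^d$ once $\min(\Nb)$ is large enough. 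Raising the uniformly convergent ratio to the power $\nu$ then gives uniform convergence $\hat \Phi_{\nu,\Nb}\to \hat \Phi_\nu$ on $\Tbb^d$, with both limit and approximants uniformly bounded.

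Next, to test weak* convergence, take any $f\in C(\Tbb^d)$ and write the natural splitting
\begin{equation}
\int_{\Tbb^d} f\, \hat\Phi_{\nu,\Nb}\,\d\eta_\Nb - \int_{\Tbb^d} f\, \hat\Phi_\nu\,\d\m
 = \int_{\Tbb^d} f\,(\hat\Phi_{\nu,\Nb}-\hat\Phi_\nu)\,\d\eta_\Nb + \left[\int_{\Tbb^d} f\, \hat\Phi_\nu\,\d\eta_\Nb - \int_{\Tbb^d} f\, \hat\Phi_\nu\,\d\m\right].
\end{equation}
The first term is bounded by $\|f\|_\infty\,\|\hat\Phi_{\nu,\Nb}-\hat\Phi_\nu\|_\infty$ (using that $\d\eta_\Nb$ is a probability measure), which tends to zero by the uniform convergence established above. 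The second term vanishes in the limit because $f\hat\Phi_\nu$ is continuous on the compact set $\Tbb^d$, and $\int \cdot\,\d\eta_\Nb$ is precisely the Riemann sum associated with the regular grid $\Tbb_\Nb^d$, which converges to $\int \cdot\,\d\m$ as $\min(\Nb)\to\infty$.

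I expect the only delicate point to be the uniform positivity of $\hat Q_\Nb$ on all of $\Tbb^d$ (and not just on the grid), which is what permits passing from ratios to uniform convergence in step one; this is precisely the content of Lemma~\ref{fact_poly_posi_grid} and Remark~\ref{rem_poly_posi_grid}, applied here to the already-convergent sequence $\hat Q_\Nb$ whose limit $\hat Q$ is bounded away from zero. Once this is in place, the remainder of the argument is a routine combination of uniform convergence and Riemann-sum convergence against a fixed continuous test function $f$.
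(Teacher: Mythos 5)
Your proposal is correct and follows essentially the same route as the paper's proof: uniform convergence of $\hat{P}_{\Nb}/\hat{Q}_{\Nb}$ to $\hat{P}/\hat{Q}$ (hence of the $\nu$-th powers) deduced from \cref{thm_converg_disc_cont}, combined with Riemann-sum convergence of $\d\eta_{\Nb}$ against a fixed continuous test function. You merely make explicit the details the paper leaves implicit --- the uniform lower bound on $\hat{Q}_{\Nb}$ and the two-term splitting of the integral --- which is a faithful elaboration rather than a different argument.
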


\begin{proof}
	By \cref{thm_converg_disc_cont}, {there exists a subsequence $(\hat{P}_{\Nb}, \hat{Q}_{\Nb})$ such that the rational function $\hat \Phi_{\nu, \Nb}$ becomes strictly positive for $\min(\Nb)$ sufficiently large. Then it is not difficult to show that $\hat \Phi_{\nu, \Nb}$ converges to $\hat{\Phi}_\nu$ uniformly as $\min(\Nb)\rightarrow\infty$ given the convergence in \cref{converg_poly_coeff}}.
	Moreover, for any continuous function $f(\thetab)$ on $\Tbb^d$, we have the convergence
	$\int_{\Tbb^d} f\, {\hat \Phi_{\nu, \Nb}} \d\eta_{\Nb} \to \int_{\Tbb^d} f\, {\hat\Phi_\nu} \d\m$ as $\min(\Nb)\to\infty$.
	In other words, the measures $\hat \Phi_{\nu, \Nb}\d\eta_{\Nb}$ converge in weak* to $\hat\Phi_\nu \d \mu$. 
\end{proof}

The above corollary states that if $\nu\geq\frac{d}{2}+1$ then we can compute an arbitrarily good approximation of Problem \cref{primal_pb} by solving the corresponding periodic problem on a sufficiently large grid; in the case $\nu<\frac{d}{2}+1$ it is not guaranteed that the discrete solution represents an approximate solution to Problem \cref{primal_pb}, e.g., the case with $d\geq 3$ and $\nu=1$ {or $2$}.

\section{Numerical example: spectral estimation of a $3$-d random field}\label{sec:sysid_app}

Following the discrete formulation described in \cref{sec:circulant}, here we present numerical experiments on the {spectral estimation of a three-dimensional ($d=3$) stationary random field generated by a linear shaping filter with a white noise input}. 
Throughout this section, we set $\nu=3$ which is taken as an oracle parameter assumed to be known in the {estimation} procedure. Notice that we have chosen the parameters to respect the condition $\nu\geq \frac{d}{2}+1$.

Let $W(z_1,z_2,z_3)$ be the transfer function of a $3$-d linear time-invariant (LTI) system, which is excited by a white noise process/field $e(t_1,t_2,t_3)$ and produces an output process $y(t_1,t_2,t_3)$.
Furthermore, let the true system $W$ have the structure that corresponds to our optimal spectrum \cref{Phi_nu}, i.e., our estimation paradigm considers the most parsimonious model structure containing the actual model:
\begin{equation}\label{transfer_func_W}
W(\zb) = \left[ \frac{b(\zb)}{a(\zb)} \right]^\nu = \left[ \frac{b_0 - b_1z_1^{-1} - b_2z_2^{-1} - b_3z_3^{-1}}{a_0 - a_1z_1^{-1} - a_2z_2^{-1} - a_3z_3^{-1}} \right]^\nu
\end{equation}
where $(z_1,z_2,z_3)$ is abbreviated as $\zb$, so that the \emph{degree-one} polynomials $a(\zb)$ and $b(\zb)$ are characterized by their coefficient vectors $\ab=[a_0,\dots,a_3]$ and $\bb=[b_0,\dots,b_3]$, respectively\footnote{Such a model can equivalently be described in the time domain by a $3$-d autoregressive moving-average (ARMA) recursion.}. 
If the white noise input $w$ has unit variance, then the spectral density of the output process $y$ is $(P/Q)^\nu$ where
$P(e^{i\thetab}) = |b(e^{i\thetab})|^2$ and $Q(e^{i\thetab}) = |a(e^{i\thetab})|^2$ are two symmetric polynomials in $\overline{\Pfrak}_+$.
Next we identify the index set $\Lambda$ that has gone through the analysis of previous sections.
Let
$\Lambda_+ := \{\, (0,0,0),\, (1,0,0),\, (0,1,0),\, (0,0,1) \,\}$
be the index set corresponding to $a(\zb)$ and $b(\zb)$. Clearly, we have the relation\footnote{The set difference in \cref{ex_Lambda_set} is understood as $A-B:=\{x-y : x\in A,\ y\in B\}$.} 
\begin{equation}\label{ex_Lambda_set}
\Lambda = \Lambda_+ - \Lambda_+ = \Lambda_{\half} \cup (-\Lambda_{\half})
\end{equation}
where we can choose e.g.,
$\Lambda_{\half} = \Lambda_+ \cup \{\, (1,-1,0),\, (1,0,-1),\, (0,1,-1) \,\}$.
Due to the symmetry $q_{-\kb}=q_{\kb}$ and $p_{-\kb}=p_{\kb}$, the optimization variables of the dual problem \cref{dual_pb} or \cref{dual_pb_discrete} in this particular example can be identified with $(\pb,\qb)$ where
\begin{equation}\label{dual_variables_ex}
\pb=\{p_\kb : \kb\in\Lambda_{\half}\backslash\{\zerob\}\} \text{ and }  \qb = \{q_{\kb} : \kb\in\Lambda_{\half}\}, 
\end{equation}
so that the total number of variables is $13$.
Moreover, we see that the normalization condition $p_\zerob=1$ for the numerator polynomial $P$ translates into $\|\bb\|^2=1$. {For simplicity}, we specify $a_0=1$.

The model \cref{transfer_func_W}, albeit simple, allows convenient pole/zero specification. For example, take $a(\zb) = 1 - a_1z_1^{-1} - a_2z_2^{-1} - a_3z_3^{-1}$ with $a_j = \rho_j e^{i\varphi_j}$, $j=1,2,3$ if we admit complex coefficients. Then we can choose the moduli such that $\rho_1+\rho_2+\rho_3=1$, so that the point $(e^{i\varphi_1},e^{i\varphi_2},e^{i\varphi_3})$ on the $3$-torus, or roughly, the frequency vector $(\varphi_1,\varphi_2,\varphi_3)\in\Tbb^3$ will be a pole of $W$. In what follows, we shall perform simulations on two instances of the true model \cref{transfer_func_W} having the same \emph{real} denominator $a(\zb)$ such that
$\mathtt{abs}(\ab) = [1, 0.3, 0.3, 0.3],\ \mathtt{angle}(\ab) = \zerob$, 
where $\mathtt{abs}$ and $\mathtt{angle}$ are Matlab functions for retrieving the element-wise modulus and phase angle of a complex array. Clearly, the resulting $Q=|a|^2$ is strictly positive on $\Tbb^3$.
The difference between the two true models lies in the numerator polynomial $b(\zb)$, and they are specified as
\begin{subequations}
\begin{align}
\mathtt{abs}(\tilde{\bb}_1) = [1, 0.2, 0.3, 0.4],\ \mathtt{angle}(\tilde{\bb}_1) = [0, \pi, \pi, \pi], \ \bb_1=\tilde{\bb}_1/\|\tilde{\bb}_1\|; \label{model_zeroless} \\ 
\mathtt{abs}(\tilde{\bb}_2) = [1, 0.2, 0.3, 0.5],\ \mathtt{angle}(\tilde{\bb}_2) = [0, \pi, \pi, \pi], \ \bb_2=\tilde{\bb}_2/\|\tilde{\bb}_2\|. \label{model_with_zero} 
\end{align}
\end{subequations}
As a consequence, the symmetric polynomial $P_1=|b_1|^2$ is strictly positive, while $P_2=|b_2|^2$ (hence also the spectrum) has a zero at the frequency vector $[\pi,\pi,\pi]$.

Given a true model \cref{transfer_func_W}, the procedure for {spectral estimation} is summarized below:
\begin{enumerate}
	\item Fix $N=20$ and the grid size $\Nb=[N,N,N]$ for $\Tbb_\Nb^3$. Choose a regularization parameter $\lambda>0$.
	\item Compute the covariances $\{c_\kb : \kb\in\Lambda\}$ and the $\nu$-cepstral coefficients $\{m_\kb : \kb\in\Lambda_0\}$ via \cref{constraint_cov} and \cref{constraint_cep} where $\d\m$ is replaced by $\d\eta$ in \cref{discrete_measure}.
	\item Solve the discrete regularized dual problem \cref{dual_pb_discrete} given $\cb$ and $\mb$. 
	\item Compare the optimal solution $(\hat{\pb},\hat{\qb})$ with the polynomial coefficients $(\pb,\qb)$ computed from the true model parameters $(\ab,\bb)$ and evaluate the error.
\end{enumerate}

In Step 3, the optimization problem is solved using Newton's method. In doing so, we need to compute the gradient and the Hessian of the regularized dual function after a suitable \emph{choice of basis}. Referring to \cref{dual_variables_ex}, we see that the dual variable $(\pb,\qb)$ resides in the Euclidean space $\Rbb^{13}$ after vectorization, and we can conveniently choose the canonical basis $\{e_j\}_{j=1}^{13}$ where the vector $e_j$ has one at the $j$-th component and zeros elsewhere. Each $e_j$ corresponds to a direction (polynomial) $\delta P_j$ and $\delta Q_j$. More precisely,
\begin{equation}
\begin{aligned}
q_{\kb} = 1 & \text{ gives } \delta Q = e^{-i\innerprod{\kb}{\thetab}} + e^{i\innerprod{\kb}{\thetab}},\ \delta P = 0 \text{ for } \kb\in\Lambda_{\half}; \\
p_{\kb} = 1 & \text{ gives } \delta Q = 0,\ \delta P = e^{-i\innerprod{\kb}{\thetab}} + e^{i\innerprod{\kb}{\thetab}} \text{ for } \kb\in\Lambda_{\half}\backslash \{\zerob\}.
\end{aligned}
\end{equation}
With some abuses of notation, we identify the regularized dual function $J_{\nu,\lambda}^{\Nb}$ as function of $(\pb,\qb)$ in \cref{dual_variables_ex}. Then the gradient is just a vector in $\Rbb^{13}$ whose $j$-th entry is
\begin{equation}
\delta J_{\nu,\lambda}^{\Nb}(\pb,\qb;e_j) = \delta J_{\nu,\lambda}^{\Nb}(P,Q;\delta Q_j) + \delta J_{\nu,\lambda}^{\Nb}(P,Q;\delta P_j),
\end{equation}
where the formula for the first-order differential can be adapted from \cref{reg_J_diff} to the discrete case.
Similarly, the Hessian is a $13\times 13$ matrix whose $(j,k)$ element is
\begin{equation}
\delta^2 J_{\nu,\lambda}^{\Nb}(\pb,\qb;e_j,e_k) = \delta^2 J_{\nu,\lambda}^{\Nb} (P,Q; \delta P_j, \delta Q_j, \delta P_k, \delta Q_k),
\end{equation}
where the formula for the second-order differential is given in \cref{reg_J_sec_diff}.
After the Newton direction is computed, the stepsize is determined by the standard backtracking line search, see e.g., \cite{bv_cvxbook}.
It is worth mentioning that evaluation of the complex exponential $e^{-i\innerprod{\kb}{\thetab}}$ {and} the polynomials $a(\zb)$ and $b(\zb)$ in \cref{transfer_func_W} on the grid $\Tbb_\Nb^d$ can be done efficiently via the FFT. This is one of the reasons that the discrete formulation given in \cref{sec:circulant} is appealing.

Next we report the simulation results. For the zeroless model \cref{model_zeroless}, we repeat the {estimation} procedure with a wide range of values of the regularization parameter, namely $\lambda= 10^{-10}, 10^{-8}, 10^{-6}, 10^{-4}, 10^{-2}, 1$, and the errors $\|(\hat{\pb},\hat{\qb}) - (\pb,\qb)\|$ are shown in the blue line of the left panel of \cref{fig:secx_vs_lambda}. It is clear that the error decreases monotonically as $\lambda\to 0$. Another simulation is performed on the model \cref{model_with_zero} with a spectral zero, and the same observation can be made, although the orange error curve goes down more slowly and rests on a higher level than the blue one as $\lambda$ decreases.
Such a result is not so surprising and can be explained by the presence of a spectral zero in the model \cref{model_with_zero}.
Indeed, the latter model does not fall into the model class prescribed by \cref{Phi_nu} where both $P$ and $Q$ are strictly positive and for this reason, regularization is needed in order to construct an approximate model within that model class.

In the right panel of \cref{fig:secx_vs_lambda}, we also compare the values of the reconstructed spectral densities with the true one corresponding to the model \cref{model_with_zero} (with a spectral zero, the orange line in the left panel) along a cross section of the true spectral zero located at the grid index\footnote{Under the Matlab convention, the first array index in each dimension is $1$.} $[11, 11, 11]$ where the true spectrum has a numerical value on the scale of {$10^{-98}$}.
Notice that only one cross section $[\,\cdot \;11\;11\,]$ (along the first dimension) of the spectral densities is shown there, because the other two cross sections $[\,11\;\cdot \;11\,]$ and $[\,11\;11\;\cdot \,]$ are graphically indistinguishable from the presented figure in the logarithmic scale. One can tell that as $\lambda\to 0$, the approximation of the true nonnegative spectrum using strictly positive ones becomes increasingly better, which certainly agrees with the finding of the left panel.

\begin{figure}[!t]
	\centering
	\includegraphics[width=0.47\textwidth]{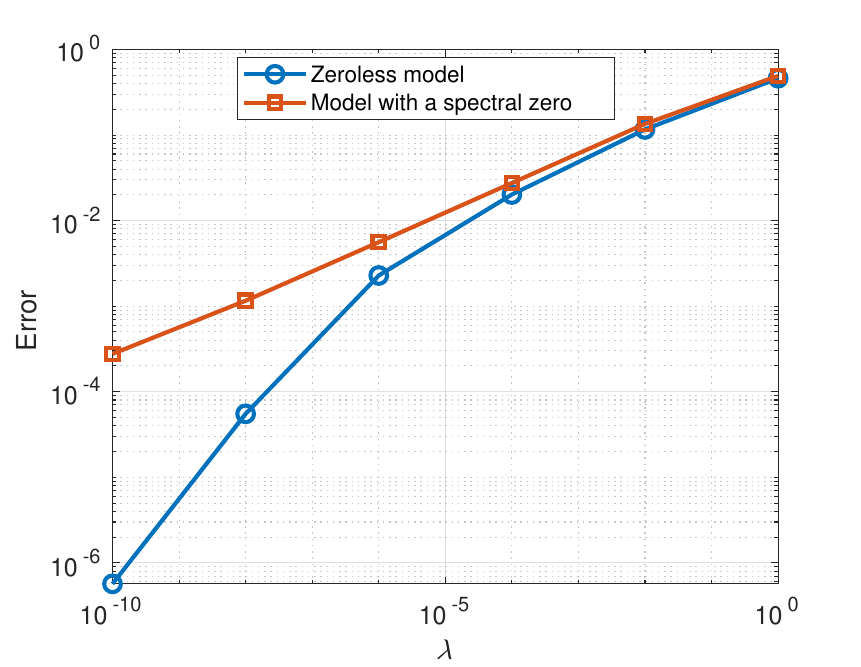}
	\includegraphics[width=0.5\textwidth]{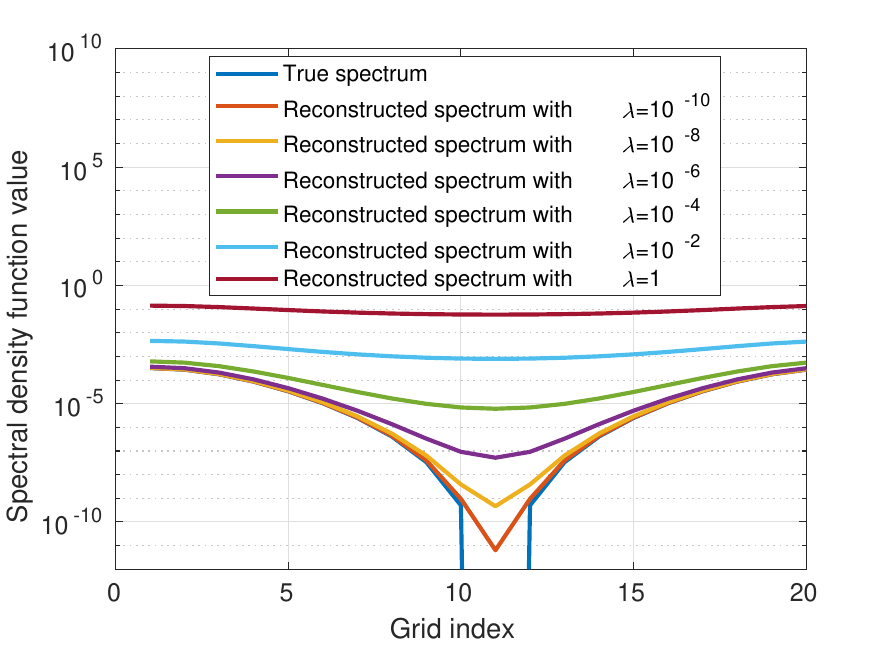}
	\caption{\emph{Left:} Error of spectrum reconstruction versus the regularization parameter $\lambda = 10^{-10}, 10^{-8}, 10^{-6}, 10^{-4}, 10^{-2}, 1$, where both axes are in a logarithmic scale. \emph{Right:} The true spectrum with a zero and the reconstructed spectra with {the previous} choices of $\lambda$ at the cross section $[\,\cdot \;11\;11\,]$, i.e., $\hat{\Phi}_{\nu,\lambda}(e^{i\thetab})$ with $\thetab=2\pi\times[(k-1)/20, 11/20, 11/20]$ for grid indices $k=1,\ldots, 20$, where the vertical axis is in a logarithmic scale. Note that the display of the true spectrum (the blue line) is incomplete because its minimum (at index $11$) is on the scale of {$10^{-98}$}.}
	\label{fig:secx_vs_lambda}
\end{figure}

\section{Conclusions}\label{sec:conclusion}

In this paper, we generalize the classical optimization-based covariance and cepstral extension problem to a sequence of parametrized formulations that is flexible enough to guarantee a rational spectral density as the solution. The notions of the $\nu$-cepstral coefficient and the $\nu$-entropy are utilized in order to bring about the general optimization formulation. Following the lines in our dual analysis, it turns out that regularization is needed in order to enforce a strictly positive spectral density as a solution to the primal problem. In this direction, we propose a new regularization term which is compatible with the $\nu$-entropy, so that the regularized dual problem admits a unique interior-point solution. Furthermore, we show that such a solution depends continuously on the covariances and the $\nu$-cepstral coefficients so that the regularized dual problem is well-posed in the sense of Hadamard.
In order to simplify the numerical computation, we provide a discrete formulation of the optimization problem which can be associated to a periodic random field. It is shown that the discrete solution is a reasonable approximation of the continuous counterpart via a convergence argument as the grid size goes to infinity. 
Numerical simulations in {spectral estimation} reveal that the regularized strictly positive solution of our general optimization problem can well approximate a true model with a spectral zero (hence only nonnegative) when the model class is correctly chosen and the regularization parameter $\lambda$ is small.

\section*{Acknowledgement}

The authors would like to thank the anonymous reviewers for their constructive comments that helped to significantly improve the quality of the paper.

\appendix

\section{Convexity of the function $g(x,y)$ in \cref{func_g} with $\nu\geq 2$ odd}\label{sec:appendix_A}

Consider the bivariate function $g_1(x,y) := x^\nu/y^{\nu-1}$ where $x\in\Rbb$, $y>0$.
It is then straightforward to compute
\begin{equation}\label{derivatives_g_1}
\nabla g_1(x,y) = x^{\nu-1} y^{-\nu} \bmat \nu y \\ (1-\nu) x\emat, \quad 
\nabla^2 g_1(x,y) = \underbrace{\nu(\nu-1)}_{>0} x^{\nu-2} y^{-\nu-1} \bmat y^2 & -xy \\ -xy & x^2 \emat.
\end{equation}
By assumption the odd number $\nu$ is in fact at least $3$, so the factor $x^{\nu-2}$ in the Hessian has the same sign as $x$ (for $x\neq 0$). Therefore, $g_1$ is convex in the region $U_1 := \{x> 0,\ y>0\}$ and \emph{concave} in $U_2 := \{x\leq 0, \ y>0\}$, and the convexity of the composite function $g(x,y)=\max\{0, g_1(x,y)\}$ does not follow in a trivial manner.

We notice that $g$ is a $C^1$ function in the upper half plane $U_1\cup U_2$. Indeed, we only need to establish the continuity of the partial derivatives on the vertical half axis $\{x=0,\ y>0\}$. The latter point is obvious since $g_1$ is smooth on the upper half plane and $\nabla g_1(0,y)=\zerob$. Hence, we can use the first-order derivative test for convexity which is $g(x_1,y_1) \geq g(x_2,y_2) + \nabla g(x_2,y_2)^\top \bmat x_1-x_2 \\ y_1-y_2\emat$ for two arbitrary points $(x_1, y_1)$ and $(x_2,y_2)$ in the upper half plane.
Given a partition $U_1, U_2$ of the domain, taking two points leads to four cases, and the only nontrivial case is $(x_1,y_1)\in U_2$ and $(x_2,y_2)\in U_1$:
\begin{equation}
0 \geq g_1(x_2,y_2) + \nabla g_1(x_2,y_2)^\top \bmat x_1-x_2 \\ y_1-y_2\emat
\iff 0 \geq \nu y_2 x_1 + (1-\nu) x_2y_1.
\end{equation}
The latter inequality holds because $x_1\leq 0$ and $1-\nu<0$. Therefore, $g$ is indeed convex when $\nu$ is a fixed odd number greater than $2$.

\bibliographystyle{siamplain}
\bibliography{references}

\end{document}